\documentclass[12pt]{amsart}
\usepackage{amsfonts}
\usepackage{amssymb,mathrsfs}
\usepackage{amsmath}
\usepackage{enumerate}
\usepackage{latexsym}
\usepackage{graphicx}
\usepackage{bm}
\usepackage{subfigure}
 \usepackage{float}
\usepackage{amsmath}
\usepackage{amsthm}
\usepackage{verbatim}
\usepackage{vmargin}
\usepackage{amstext}
\usepackage{array}
\usepackage{booktabs}
\usepackage{indentfirst}
\usepackage[compress]{cite}
\usepackage{url}

\usepackage{geometry}

\numberwithin{equation}{section}

\newtheorem{thm}{Theorem}[section]
\newtheorem{cor}[thm]{Corollary}
\newtheorem{lma}[thm]{Lemma}

\newtheorem{defn}[thm]{Definition}
\newtheorem{rem}[thm]{Remark}

\theoremstyle{definition}

\begin{document}
\setlength{\oddsidemargin}{80pt}
\setlength{\evensidemargin}{80pt}
\setlength{\topmargin}{80pt}
\author{Jun~Xian, Xiaoda~Xu}

\address{J.~Xian\\School of Mathematics and Guangdong Province Key Laboratory of Computational Science
 \\Sun
Yat-sen University
 \\ 
510275 Guangzhou\\
China.} \email{xianjun@mail.sysu.edu.cn}

\address{X.~Xu\\School of Mathematics
 \\Sun
Yat-sen University
 \\
510275 Guangzhou\\
China.} \email{xuxd26@mail2.sysu.edu.cn}

\title[Expected Uniform Integration approximation]{Expected uniform integration approximation under general equal measure partition}

\keywords{$L_{2}-$discrepancy; $L_{p}-$discrepancy; Reproducing kernel; Equal measure partition; Random sampling; Hilbert's space filling curve; Integration approximation.}

\date{September 28, 2021}

\subjclass[2010]{65C10, 11K38, 65D30, 41A30.}
\begin{abstract}
In this paper, we study bounds of expected $L_2-$discrepancy to give mean square error of uniform integration approximation for functions in Sobolev space $\mathcal{H}^{\mathbf{1}}(K)$, where $\mathcal{H}$ is a reproducing Hilbert space with kernel $K$. Better order $O(N^{-1-\frac{1}{d}})$ of approximation error is obtained, comparing with previously known rate $O(N^{-1})$ using crude Monte Carlo method. Secondly, we use expected $L_{p}-$discrepancy bound($p\ge 1$) of stratified samples to give several upper bounds of $p$-moment of integral approximation error in general Sobolev space $F_{d,q}^{*}$.
\end{abstract}

\maketitle

\section{Introduction}\label{intro}

Let $f: [0,1]^{d}\rightarrow \mathbb{R}$, a way of approximating the integral $$I(f)=\int_{[0,1]^{d}}f(x)dx$$ consists of randomly drawing points $\mathbf{P}=\{x_1,x_2,\ldots,x_N\}\in [0,1]^{d}$ and computing $$\tilde{I}(f,\mathbf{P})=\frac{1}{N}\sum_{i=1}^{N}f(x_{i}).$$

Under mild conditions on the regularity of $f$, $\tilde{I}(f,\mathbf{P})$ tends to $I(f)$ with probability 1, i.e., for any $\epsilon>0$,

$$\lim_{N\rightarrow\infty}\mathop{\mathbb{P}}\limits_{x_1,x_2,\ldots,x_N}
\Big\{\Big|I(f)-\tilde{I}(f,\mathbf{P})\Big|>\epsilon\Big\}\rightarrow 0.$$

Monte Carlo (MC) method approximates integral $I(f)$ through average of randomly distributed sampling data collected on a discrete set, order of convergence $O(N^{-\frac{1}{2}})$ can be achieved, which also implies mean square error $O(N^{-1})$, see \cite{GSWZ2020,DKS2013}. 

We mainly adopt the discrepancy theory to estimate the approximation bounds. Firstly, we give the definitions of star discrepancy and $L_p-$discrepancy.

\textbf{Star discrepancy}. The \textbf{star discrepancy} of a sampling set $P_{N, d}=\{t_{i}\}_{1\leq i\leq N},t_i \\ \in [0,1]^{d}$ is defined by:

$$\label{*-d}D_{N}^{*}\left(t_{1}, t_{2}, \ldots, t_{N}\right):=\sup_{B\subset\mathscr{B}}|\frac{A(B;N;P_{N, d})}{N}-\lambda(B)|,$$
where $A(B;N;P_{N, d})$ denotes the number of points from $P_{N, d}$ that belongs to the rectangle $B$ anchored at $0$, $\mathscr{B}$ denotes the collection of all rectangles $B$ and $\lambda(B)$ denotes the Lebesgue measure of $B$.

The research of the star discrepancy can be divided into two aspects, one is to improve the star discrepancy bounds of suitable regime for $N$ and $d$(generally polynomial dependence, i.e., $N=O(d^{\alpha}), \alpha\ge 1$), which is also called pre-asymptotic bound, classical results of the star discrepancy upper bounds could reach the order of convergence $O(N^{-\frac{1}{2}})$, which also involves the use of random samples (uniformly distributed in $[0,1]^{d}$), see \cite{Ais2011,AH2014,HNWW2001}. The other is the deterministic design using Quasi-Monte Carlo (QMC) point sets such as Halton, Hammersley, Niederreiter point sets etc. \cite{Ht1960,Ham1960,Nie1988}, which is closely related to Quasi-Monte Carlo integration approximation according to the famous Koksma-Hlawka inequality, which is given by:

\begin{equation}\label{K-H}
\left|\int_{[0,1]^{d}} f(x) d x-\frac{1}{N} \sum_{t \in P_{N, d}} f(t)\right| \leq D_{N}^{*}\left(t_{1}, t_{2}, \ldots, t_{N}\right) V(f),\end{equation}
where $D_{N}^{*}\left(t_{1}, t_{2}, \ldots , t_{N}\right)$ is the star discrepancy of $P_{N, d}$ and $V(f)$ is the total variation of $f$ in the sense of Hardy and Krause. A smaller upper bound of star discrepancy means a smaller upper bound of approximation error in \eqref{K-H}. However the Hardy-Krause condition in the Koksma-Hlawka inequality seems to be rather strict. It works well for smooth functions, but it cannot be applied to most functions with simple discontinuities. For example, the characteristic function of a convex polyhedron has bounded Hardy-Krause variation only if the polyhedron is a $d-$dimensional interval, see \cite{BCGT2013}. Therefore, it is sometimes unrealistic to expect bounded variation which serves a good approximation of \eqref{K-H} in most function spaces, including the function space $\mathcal{H}^{\mathbf{1}}(K)$ and $F_{d,q}^{*}$ mentioned in this paper. In many cases, we also call Quasi-Monte Carlo point sets \textbf{low discrepancy point sets}. For a point set $\mathscr{P}$, the convergence order could reach $O((\ln N)^{\alpha_{d}}/N)$ for fixed dimension $d$ as $N\rightarrow \infty$, where $\alpha_{d}\ge 0$ are constants depending on dimension $d$. Examples of such point sets can be found in \cite{DP2010,Nie1992}. For applications of these point sets, see \cite{APC2016,CM2004,Lai1998,Lai2009}.

In recent years, random sampling has become a rather active area of research, due to its simplicity, flexibility and effectiveness, researchers investigate random sampling for different function spaces \cite{AST2004,AST2005,BG2004,BG2010,FX2019}. Besides, centered discrepancy of random sampling and Latin hypercube sampling are investigated in \cite{FMW2002}. Motivated by these developments, we incorporate a random viewpoint into our study of discrepancy theory and point distribution to consider random sampling under equal measure partition. For special case isometric grid partition, we assign each subcube only one sampling point, this method is called \textbf{stratified sampling} or \textbf{jittered sampling}, see \cite{Glass2004, PS2016}.

\textbf{$L_{p}-$discrepancy}. For a sampling set $P_{N, d}=\{t_{1}, t_{2}, \ldots , t_{N}\}$, $L_{p}-$discrepancy is defined by

$$L_{P}(D_{N},P_{N, d})=\Big(\int_{[0,1]^{d}}|z_{1}z_{2}\ldots z_{d}-
\frac{1}{N}\sum_{i=1}^{N}\mathbf{1}_{[0,z)}(t_{i})|^{p}dz\Big)^{1/p},$$
where $1\leq p<\infty$, $\mathbf{1}_{A}$ denotes the characteristic function on set $A$.
Classical applications of $L_{p}-$discrepancy are closely related to the worst case error of multivariate integration for the Sobolev class of functions that are once differentiable in each variable with finite $L_q-$norm, where $\frac{1}{p}+\frac{1}{q}=1$, see \cite{Nie1992}. If $p=\infty$, $L_{p}-$discrepancy will come back to the star discrepancy. Among $L_p-$discrepancy for different values $p\ge 1$, $p=2$ and $p=\infty$ are the most widely studied. The $L^p-$discrepancy bound of certain constructed point set has been intensively studied and many precise results are known. Lower bounds by Roth \cite{Roth1954} and Schmidt \cite{Schm1977} and upper bounds by Chen and Skriganov \cite{CS2002} and Skriganov \cite{Skri2006} via explicit constructions show the convergence order $$O\Big(\frac{(\ln N)^{\frac{d-1}{2}}}{N}\Big)$$ for $1<p<\infty$. For studies on $L_p-$discrepancy of random samples in \cite{ZD2016}, an upper bound on the $p-$moment of the $L_p-$discrepancy $(E[N^p\cdot L_{P}^{p}(D_{N},P_{N, d})])^{1/p}$ for $2\leq p\leq \infty$ is derived by the acceptance-rejection sampler using stratified inputs, which is of order $$O\Big(N^{(1-1/d)(1-1/p)}\Big)$$ and the constant of their bound depends on dimension $d$, index $p$, acceptance-rejection sampler set $A$ and Minkowski content related to $A$. A strong law of large numbers for integration on digital nets randomized by a nested uniform scramble is provided in \cite{OR2021}. For smooth enough function, they obtain asymptotically better convergence order $O(N^{-3+\epsilon}),\epsilon>0$ of mean square error than MC. For $f\in L^2([0,1]^{d})$, the asymptotically convergence order of mean square error is $O(N^{-1})$. Besides, the strong law of large numbers of randomized QMC is also proved by using the upper bound of $p-$moment of integral error in $L^p-$space. Jittered sampling construction gives rise to a set whose expected squared $L_2-$discrepancy is smaller than that of purely random points, see \cite{PS2016}. The similar result for $L_p-$discrepancy is obtained in \cite{KP2021}. A theoretical conclusion that the jittered sampling does not have the minimal expected $L_2-$discrepancy among all stratified samples from convex equivolume partitions with the same number of points is presented in \cite{KP1}, while the same conclusion for expected $L_p-$discrepancy is still an open problem.

Furthermore, there are some commonly used random sampling strategies. For example, simple random sampling, stratified sampling, Latin hypercube sampling, upper bound of star discrepancy for simple random sampling is studied in \cite{AH2014}, worst-case error bounds with high probability of least square approximation based on simple random samples is given in \cite{KUV2021}, variances of certain functions of stratified and Latin hypercube samples are studied in \cite{MCB1979,Stein1987}. Moreover, a method to reduce the clumping of the $X$-axis and $Y$-axis projections by imposing an additional $N$-rooks (Latin hypercube) constraint on the jittered sampling (stratified sampling) pattern is presented in \cite{CSW1994}.

Former research on uniform integration approximation using random sampling in general Sobolev space $\mathbb{H}^{s}$ with smoothness parameter $s>\frac{d}{2}$ defined over the unit sphere $\mathbb{S}^{d}$, see \cite{BSSW2014}, expected value of the squared uniform integration approximation for random samples collected on $d-$dimensional unit sphere $\mathbb{S}^{d}$ is given, relatively result using stratified samples based on equal measure partition to $\mathbb{S}^{d}$ is also presented. In this paper, the idea of stratified sampling by equal measure partition to $[0,1]^{d}$ is adopted to improve the classical mean square error of MC to $O(N^{-1-\frac{1}{d}})$ for functions in Sobolev space $\mathcal{H}^{\mathbf{1}}(K)$ equipped with a reproducing kernel. In more general Sobolev space $F_{d,q}^{*}$, see \cite{NW2010}, for functions equipped with some boundary conditions, we obtain upper bounds of $p$-moment of integral error.

The rest of this paper is organized as follows. In Section \ref{prelim} we first introduce some preliminaries, which are on reproducing kernel space and general equal measure partition. In Section \ref{app1} we present several improved mean square error bounds of uniform integration approximation in a Sobolev space $\mathcal{H}^{\mathbf{1}}(K)$ using stratified random sampling method according to different equal measure partition manners. In Section \ref{app2} we give several upper bounds of $p-$moment of integral approximation error in general Sobolev space $F_{d,q}^{*}$. Finally, in Section \ref{conclu} we conclude the paper with a short summary.

\section{Preliminaries on reproducing kernel Hilbert space and general equal measure partition}\label{prelim}

Before introducing the main result, we list preliminaries used in this paper. Firstly, reproducing kernel Hilbert space is introduced, we adopt the definitions in \cite{CZ2007}.

\begin{defn}
Let $X$ be a metric space, we say that $K:X\times X\rightarrow \mathbb{R}$ is symmetric when $$K(x,t)=K(t,x)$$ for all $x,t\in X$, and that it is positive semidefinite when for all finite sets $x=\{x_1,x_2,\ldots,x_N\}\subset X$, the $N\times N$ matrix $K[x]$ whose $(i,j)$ entry $K(x_i,x_j)$ is positive semidefinite. We say that $K$ is a Mercer kernel if it is continuous, symmetric and positive semidefinite. The matrix $K[x]$ above is called the Gramian of $K$ at $x$.
\end{defn}

For $x\in X,$ we denote by $K_x$ the function

\begin{align*}
K_x:&X \rightarrow \mathbb{R}\\& t\mapsto K(x,t).
\end{align*}

The main result of the reproducing kernel Hilbert space is the following:

\begin{thm}
There exits a unique Hilbert space $(\mathcal{H}(K),\langle,\rangle_{\mathcal{H}(K)})$ of functions on $X$ satisfying the following conditions:

(i) for all $x\in X,K_{x}\in \mathcal{H}(K)$;

(ii)the span of the set $\{K_{x}|x\in X\}$ in dense in $\mathcal{H}(K)$;

(iii)for all $f\in \mathcal{H}(K)$ and $x\in X, f(x)=\langle K_{x},f\rangle_{\mathcal{H}(K)}$.

Then Hilbert space $\mathcal{H}(K)$ is said to be a reproducing kernel Hilbert space(RKHS), property (iii) is referred to as the reproducing property.
\end{thm}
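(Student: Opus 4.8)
The plan is to prove existence and uniqueness separately, constructing $\mathcal{H}(K)$ by hand from the kernel itself. First I would introduce the pre-Hilbert space $H_0 := \mathrm{span}\{K_x : x \in X\}$ of finite linear combinations $f = \sum_i a_i K_{x_i}$, and define on it the bilinear form
\[
\Big\langle \sum_i a_i K_{x_i},\ \sum_j b_j K_{y_j}\Big\rangle := \sum_{i,j} a_i b_j\, K(x_i, y_j).
\]
Computing with this formula gives, for $g = \sum_j b_j K_{y_j}$, the identity $\langle f, g\rangle = \sum_j b_j f(y_j)$, which manifestly depends only on $f$ as a function and on the representation of $g$; by symmetry of $K$ the same quantity equals $\sum_i a_i g(x_i)$, depending only on $g$ and the representation of $f$. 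Hence the form depends on $f$ and $g$ alone and is well defined on $H_0$. Bilinearity and symmetry are then immediate, while positive semidefiniteness of each Gramian $K[x]$ yields $\langle f, f\rangle \ge 0$. Specializing to $g = K_x$ shows the reproducing identity $\langle K_x, f\rangle = f(x)$ already holds throughout $H_0$.

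Next I would upgrade this semi-inner product to a genuine inner product and then complete. The pivotal inequality is Cauchy--Schwarz applied to the reproducing identity: for $f \in H_0$,
\[
|f(x)|^2 = |\langle K_x, f\rangle|^2 \le K(x,x)\,\langle f, f\rangle .
\]
Thus $\langle f, f\rangle = 0$ forces $f(x) = 0$ for every $x$, so $f = 0$ as a function and the form is positive definite. The same bound, rewritten as $|f_n(x) - f_m(x)| \le \sqrt{K(x,x)}\,\|f_n - f_m\|$, shows every Cauchy sequence in $H_0$ converges pointwise on $X$. I would then take the abstract completion of $H_0$ and identify each equivalence class with its pointwise limit function; the displayed inequality guarantees this identification is both well defined and injective, so $\mathcal{H}(K)$ is realized as an honest space of functions on $X$. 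Properties (i) and (ii) are built into the construction, and (iii) extends from $H_0$ to all of $\mathcal{H}(K)$ by continuity of the evaluation functional $f \mapsto f(x)$, which is exactly what the inequality supplies.

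For uniqueness, suppose $\mathcal{H}_1$ and $\mathcal{H}_2$ both satisfy (i)--(iii). Applying the reproducing property in each space to $f = K_y$ gives $\langle K_x, K_y\rangle_{\mathcal{H}_j} = K_y(x) = K(x,y)$ for $j = 1,2$, so the two inner products agree on the common dense subspace $H_0$. Since each $\mathcal{H}_j$ is a completion of $H_0$ in which norm convergence forces pointwise convergence (again by the Cauchy--Schwarz bound), the canonical isometry between the two completions preserves pointwise values and is therefore the identity on functions, giving $\mathcal{H}_1 = \mathcal{H}_2$ together with identical inner products.

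The main obstacle I anticipate is not any single computation but the realization step: passing from the abstract metric completion of $H_0$ to a genuine space of functions on $X$, and verifying that distinct elements of the completion correspond to distinct functions. Everything hinges on the continuity of point evaluations encoded in $|f(x)|^2 \le K(x,x)\,\langle f, f\rangle$; once that estimate is secured, both the existence of the realization and the uniqueness of the whole structure follow by routine limiting arguments.
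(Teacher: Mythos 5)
The paper never proves this statement: it is quoted verbatim as a preliminary (the classical Moore--Aronszajn theorem on reproducing kernel Hilbert spaces) from the cited book of Cucker and Zhou, so there is no internal proof to compare against. Your construction is the standard one for this theorem, and in outline it is correct: you define the pre-Hilbert space $H_0=\mathrm{span}\{K_x : x\in X\}$ with the form $\langle \sum_i a_i K_{x_i}, \sum_j b_j K_{y_j}\rangle=\sum_{i,j}a_i b_j K(x_i,y_j)$, check well-definedness via the two representations $\sum_j b_j f(y_j)=\sum_i a_i g(x_i)$, get nonnegativity from the Gramians, get definiteness and continuity of point evaluations from $|f(x)|^2\le K(x,x)\langle f,f\rangle$, complete, and prove uniqueness by noting that (iii) forces $\langle K_x,K_y\rangle_{\mathcal{H}_j}=K(x,y)$, so the two candidate spaces share a dense subspace on which the inner products agree, while norm convergence forces pointwise convergence in both.

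One step is mis-attributed, and it is exactly the step you flag as the crux. The Cauchy--Schwarz bound $|f(x)|\le \sqrt{K(x,x)}\,\|f\|$ gives \emph{well-definedness} of the realization map from the abstract completion to functions on $X$ (norm-null Cauchy sequences are pointwise null), but it cannot give \emph{injectivity}, which is the converse implication: a Cauchy sequence $(f_n)$ in $H_0$ that converges pointwise to $0$ must be shown to be norm-null, and a one-sided bound of the form ``small norm implies small values'' says nothing about this. The correct argument uses the reproducing identity on $H_0$ together with density: for fixed $g=\sum_j b_j K_{y_j}\in H_0$ one has $\langle f_n,g\rangle=\sum_j b_j f_n(y_j)\to 0$, so the limit $F$ of $(f_n)$ in the completion satisfies $\langle F,g\rangle=0$ for all $g$ in the dense subspace $H_0$, whence $F=0$. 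With that short orthogonality argument inserted in place of the appeal to the evaluation bound, your proof is complete; the uniqueness part as you wrote it is fine.
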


Secondly, the definition of the equal measure partition for $[0,1]^{d}$ is talked about in \cite{Beck1984} and discussed in \cite{PS2016} respectively, which is in the following.

For Lebesgue measure $\lambda$, there exists a partition $\Omega=\{\Omega_1,\Omega_2,\ldots,\Omega_N\}$ of $[0,1]^{d}$ into $N$ subsets $\Omega_j,1\leq j\leq N$ with the following properties:

$$
    [0,1]^{d}=\bigcup_{1\leq j\leq N}\Omega_{j}, \Omega_{j}\cap\Omega_{i}=\emptyset, j\neq i, \lambda(\Omega_{j})=\frac{1}{N}, 1\leq j\leq N,
$$

and

\begin{equation}\label{diam1}
    c_{1j}(d)N^{-\frac{1}{d}}\leq diam \Omega_{j} \leq c_{2j}(d)N^{-\frac{1}{d}}, 1\leq j\leq N,
\end{equation}
where for each subset $\Omega_{j}$, $c_{1j}(d)$ and $c_{2j}(d)$ are two constants depending only on dimension $d$, $diam A=\sup\{\theta(x,y), x,y \in A\}$ denotes the diameter of a set $A\subset [0,1]^{d}$, $\theta(\cdot,\cdot)$ is an Euclidean metric on $[0,1]^{d}$. 

Let

\begin{equation}\label{c1d}
    c_1(d)=\min \{c_{11}(d),c_{12}(d),\ldots,c_{1N}(d)\},
\end{equation}
and

\begin{equation}\label{c2d}
    c_2(d)=\max \{c_{21}(d),c_{22}(d),\ldots,c_{2N}(d)\}.
\end{equation}

Then from \eqref{c1d} and \eqref{c2d}, \eqref{diam1} means

\begin{equation}\label{diam2}
    c_{1}(d)N^{-\frac{1}{d}}\leq diam \Omega_{j} \leq c_{2}(d)N^{-\frac{1}{d}}, 1\leq j\leq N,
\end{equation}
where $c_{1}(d)$ and $c_{2}(d)$ are two constants depending only on dimension $d$.

We now consider a rectangle $R$ in $[0,1]^{d}$ anchored at $0$. For a partition $\Omega=\{\Omega_{1},\Omega_{2},
\ldots,\Omega_{N}\}$ of $[0,1]^{d}$, we put

\begin{equation}\label{jn}
I_{N}=\{j:\partial R \cap \Omega_{j}\neq \emptyset\},
\end{equation}
where $\partial R$ is the boundary of $R$.

Denote the cardinality of the index set $I_{N}$ by $|I_{N}|$ , we have the following estimation

\begin{equation}\label{CNbd0}
|I_{N}|\leq d\cdot c_2(d)\cdot N^{1-\frac{1}{d}}.
\end{equation}

In fact, 
let $R=[0,x)=\prod_{i=1}^{d}[0,x_i), R'=[0,y)=\prod_{i=1}^{d}[0,y_i)$ such that $x_i-y_i=c_2(d)\cdot N^{-\frac{1}{d}}, 1\leq i\leq d$, $R^{*}=\bigcup_{j\in I_{N}}\Omega_{j}$, thus $\lambda(R^{*})=\frac{|I_{N}|}{N}$. The union $R^{*}$ is a subset in this region $R\setminus R'.$ Therefore, we obtain

\begin{equation}\label{CNbd}
   |I_{N}|\leq N\lambda(R)-N\lambda(R').
\end{equation}

Besides, we have

\begin{equation}\label{CNbd2}
    \begin{aligned}
    &\lambda(R)-\lambda(R')\\=&\lambda([0,x))-\lambda([0,y))\\=&x_{1}x_{2}\ldots x_{d}-y_{1}y_{2}\ldots y_{d}\\=&\sum_{k=1}^{d}(y_{1}y_{2}\ldots y_{k-1}x_{k}x_{k+1}\ldots x_{d}-y_{1}y_{2}\ldots y_{k-1}y_{k}x_{k+1}\ldots x_{d})\\ \leq& d\cdot c_2(d)\cdot N^{-\frac{1}{d}}.
    \end{aligned}
\end{equation}
Combining with \eqref{CNbd} and \eqref{CNbd2}, we obtain \eqref{CNbd0}.

First case is isometric grid partition, we assign each subcube one random point, this forms jittered sampling.

\textbf{Case 1: Isometric grid partition.}

The simple case is the isometric grid partition, see \cite{Owenhpge} and \cite{PS2016}. The cube  $[0,1]^{d}$ is divided into $N$ axis parallel boxes $Q_{i},1\leq i\leq N,$ each with sides $\frac{1}{m}$. In this case, $diam Q_{i}=\frac{\sqrt{d}}{m},$ that is $c_1(d)=c_2(d)=\sqrt{d}$ in \eqref{diam2}.

For the isometric partition $\Omega=\{Q_{1},Q_{2},
\ldots,Q_{N}\}$ of $[0,1]^{d}$, let

$$
J_{N}=\{j:\partial R \cap Q_{j}\neq \emptyset\},
$$
where $\partial R$ denotes the partial of the rectangle $R$ in $[0,1]^{d}$ anchored at 0, following the step from \eqref{CNbd} to \eqref{CNbd2}, we have the estimation

$$
|J_{N}|\leq d\cdot N^{1-\frac{1}{d}}.
$$

Second case is non-isometric grid partition, the difference between it and isometric partition is that different isometric divisions are taken on each coordinate axis.

\textbf{Case 2: Non-isometric grid partition.}

In practice, for grid partition, the same value of $m$ on every dimension can be extended to use an $m_1\times m_2\times\ldots\times m_d$ grid of strata, see introduction in \cite{Owenhpge}. 

That is, if we choose only one sample point in each stratum, we have

$$
    N=\prod_{i=1}^{d}m_i.
$$

In this case, $[0,1]^{d}$ is divided into $N$ axis parallel boxes $Q'_{i},1\leq i\leq N,$ each with

$$
    \lambda(Q'_{i})=\frac{1}{N}=\frac{1}{\prod_{i=1}^{d}m_i},
$$
and

\begin{equation}\label{diamQ1}
    diam Q'_{i}=\sqrt{\sum_{i=1}^{d}\frac{1}{m_i^2}}.
\end{equation}

Thus, from \eqref{diamQ1} and fundamental inequality, we have

$$
   \frac{\sqrt{d}}{N^{\frac{1}{d}}}\leq diam Q'_{i}\leq \frac{c_0(d)}{N^{\frac{1}{d}}},
$$
where $c_0(d)$ satisfies

$$
    \sqrt{d}\leq c_0(d)\leq \sqrt{d}\cdot M,
$$
and $M$ is a constant such that

$$
    1\leq N\leq M^d.
$$

Third case is Hilbert space filling curve-based sampling, abbreviated as HSFC-based sampling, we mainly adopt the definition and notations in \cite{HO2016,HZ2019}.

\textbf{Case 3: Hilbert space filling curve-based sampling}

Hilbert space filling curve-based sampling (HSFC-based sampling) is actually a stratified sampling formed by a special partition manner. We will use the definition and properties in \cite{HO2016,HZ2019}. Let $y_i$ be the first $N=b^m$ points of the van der Corput sequence (van der Corput 1935) in base $b\ge 2$, $m=0,1,\ldots$. The integer $i-1\ge 0$ is written in base $b$ as $$i-1=\sum_{j=1}^{\infty}y_{ij}b^{j-1}$$ for $y_{ij}\in \{0,\ldots, b-1\}$. Then, $y_i$ is defined by $$y_i=\sum_{j=1}^{\infty}y_{ij}b^{-j}.$$ The scrambled version of $y_1,y_2,\ldots, y_N$ is $x_1,x_2,\ldots,x_N$ written as $$x_i=\sum_{j=1}^{\infty}x_{ij}b^{-j},$$ where $x_{ij}$ are defined through random permutations of the $y_{ij}$. These permutations depend on $y_{ik}$, for $k<j$. More precisely, $x_{i1}=\pi(y_{i1}),x_{i2}=\pi_{y_{i1}}(y_{i2})$ and generally for $j\ge 2$, $$x_{ij}=\pi_{y_{i1}\ldots y_{ij-1}}(y_{ij}).$$ Each random permutation is uniformly distributed over the $b!$ permutations of $\{0,\ldots,b-1\}$, and the permutations are mutually independent. The data values in the scrambled sequence
can be reordered such that $$x_i\sim U(I_{i}),$$ independently with $$I_i=[\frac{i-1}{N},\frac{i}{N}]$$ for $i=1,2,\ldots, N(=b^m)$. From \cite{HO2016}, we could use Hilbert mapping $H(x)$ from $[0,1]$ to $[0,1]^{d}$ for $d\ge 1,$ to map one dimensional uniformly distributed samples to $d-$dimensional uniformly distributed samples. We call this sampling manner Hilbert space filling curve-based sampling(HSFC-based sampling).
Let

\begin{equation}\label{ehi}
    E_i=H(I_i):=\{H(x)|x\in I_i\}.
\end{equation}
Then from Property 3 of HSFC in \cite{HO2016}, we obtain

\begin{equation}\label{xhuei}
    X_{i}=H(x_i)\sim U(E_i).
\end{equation}
This implies HSFC-based sampling is actually a stratified sampling, $\{E_i\}_{i=1}^{N}$ is a partition of $[0,1]^{d}$.

\section{Mean square error bounds of uniform integration approximation in Sobolev space $\mathcal{H}^{\mathbf{1}}(K)$}\label{app1}

In this section, we give the following results which are mean square error bounds of uniform integration approximation for functions in Sobolev space $\mathcal{H}^{\mathbf{1}}(K)$ equipped with a reproducing kernel, we adopt the definition of $\mathcal{H}^{\mathbf{1}}(K)$ in \cite{DP2014}. 

Let $$\mathcal{H}^{\mathbf{1}}:=\mathcal{H}^{(1,1,\ldots,1)}([0,1]^{d})$$ be the Sobolev spaces on $[0,1]^{d}$. $\forall f\in \mathcal{H}^{\mathbf{1}}$, we have $$\frac{\partial^{d}}{\partial x}f(x)\in \mathcal{H}([0,1]^{d}),$$ where $\partial x=\partial x_1\partial x_2\ldots,\partial x_d$, $\mathcal{H}([0,1]^{d})$ denotes the Hilbert space. 
Then for $f,g \in \mathcal{H}^{\mathbf{1}}$, we define the following inner product for the Hilbert space $\mathcal{H}([0,1]^{d})$,

\begin{equation}\label{rkhsinpro}
\langle f,g\rangle_{\mathcal{H}^{\mathbf{1}}}
=\int_{[0,1]^{d}}\frac{\partial^{d}f}{\partial x}(t)\frac{\partial^{d}g}{\partial x}(t)dt.
\end{equation}

Thus, we set $\|f\|_{\mathcal{H}^{\mathbf{1}}}=\langle f,f\rangle_{\mathcal{H}^{\mathbf{1}}}^{1/2}$ be the norm induced by the inner product defined in \eqref{rkhsinpro}.
We now define a reproducing kernel in $\mathcal{H}^{\mathbf{1}}$, which is given by

\begin{equation}\label{specialkernel1}
K(x,y)=\int_{[0,1]^{d}}\mathbf{1}_{(x,1]}(t)
\mathbf{1}_{(y,1]}(t)dt,
\end{equation}
where $x=(x_1,x_2,\ldots,x_d), y=(y_1,y_2,\ldots,y_d)$, $(x,1]=\prod_{i=1}^{d}(x_i,1],(y,1]=\prod_{i=1}^{d}(y_i,1]$, and $\mathbf{1}_{A}$ denotes the characteristic function on set $A$. $\mathcal{H}^{\mathbf{1}}(K)$ denotes the Sobolev space $\mathcal{H}^{\mathbf{1}}$ equipped with a reproducing kernel function $K(x,y)$ defined in \eqref{specialkernel1}.  Correspondingly, in \eqref{rkhsinpro}, we define $\langle f,g\rangle_{\mathcal{H}^{\mathbf{1}}}=\langle f,g\rangle_{\mathcal{H}^{\mathbf{1}}(K)}$.

Easy to check that for kernel function defined in \eqref{specialkernel1}, the reproducing property is satisfied, that is,

$$
\langle f,K(\cdot,y)\rangle_{\mathcal{H}^{\mathbf{1}}(K)}=
\int_{[0,1]^{d}}\frac{\partial^{d}f}{\partial x}(t)\frac{\partial^{d}K(x,y)}{\partial x}(t)dt=f(y).
$$

\begin{thm}\label{epecintap}
Given a partition $\Omega=\{\Omega_{1},\Omega_{2},
\ldots,\Omega_{N}\}$ of the unit cube $[0,1]^{d}$, any $d \ge 2$ and $N\in \mathbb{N}$, samples $Y_{1}, Y_{2}, Y_{3}, \ldots, Y_{N}$ are uniformly distributed in the subset $\Omega_{1}, \Omega_{2}, \Omega_{3}, \ldots, \Omega_{N}$ which forms an equal measure partition of $[0,1]^{d}$, then we have

\begin{equation}
\mathbb{E}[\sup_{f\in \mathcal{H}^{\mathbf{1}}(K),\|f\|_{\mathcal{H}^{\mathbf{1}}(K)}\leq 1 }\Big|\frac{1}{N}\sum_{n=1}^{N}f(Y_n)-\int_{[0,1]^{d}}f(x)dx\Big|^2]
\leq \frac{d\cdot c_{2}(d)}{N^{1+\frac{1}{d}}},
\end{equation}
where $c_{2}(d)$ is defined in \eqref{c2d} which is related to the maximum diameter of $\Omega_j,1\leq j\leq N$, $f$ is a function in Sobolev space $\mathcal{H}^{\mathbf{1}}(K)$.
\end{thm}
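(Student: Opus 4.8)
The plan is to reduce the expected squared worst-case integration error to the expected squared $L_2$-discrepancy of the stratified sample, and then to control the latter by a variance computation that is supported only on the strata cut by the boundary, so that the counting bound \eqref{CNbd0} finishes the argument. The first and decisive step is to fix a realization of $\mathbf{Y}=(Y_1,\ldots,Y_N)$ and treat $L(f):=\frac1N\sum_{n=1}^N f(Y_n)-\int_{[0,1]^d}f(x)\,dx$ as a bounded linear functional on $\mathcal{H}^{\mathbf{1}}(K)$. By the reproducing property $f(Y_n)=\langle f,K_{Y_n}\rangle$ together with $\int f(x)\,dx=\langle f,\int_{[0,1]^d}K_x\,dx\rangle$, the functional $L$ is represented by $h:=\frac1N\sum_{n=1}^N K_{Y_n}-\int_{[0,1]^d}K_x\,dx$, whence $\sup_{\|f\|\le 1}|L(f)|=\|h\|_{\mathcal{H}^{\mathbf{1}}(K)}$ by Cauchy--Schwarz. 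Using the inner product \eqref{rkhsinpro} and differentiating the kernel \eqref{specialkernel1}, the mixed partial $\frac{\partial^d h}{\partial x}(t)$ equals, up to the factor $(-1)^d$, the local discrepancy function $\frac1N\sum_{n=1}^N\mathbf{1}_{[0,t)}(Y_n)-t_1\cdots t_d$. Squaring removes the sign, so $\|h\|^2=\int_{[0,1]^d}\big(\frac1N\sum_{n=1}^N\mathbf{1}_{[0,z)}(Y_n)-z_1\cdots z_d\big)^2\,dz$; that is, the worst-case squared error coincides \emph{pointwise in the sample} with the squared $L_2$-discrepancy of $\{Y_n\}$. This identity also dispenses with any concern about interchanging $\mathbb{E}$ and the supremum, since the two sides agree for every fixed $\mathbf{Y}$.

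Next I would take expectations and apply Fubini to obtain $\mathbb{E}[\|h\|^2]=\int_{[0,1]^d}\mathbb{E}\big[(\frac1N\sum_{n=1}^N\mathbf{1}_{[0,z)}(Y_n)-z_1\cdots z_d)^2\big]\,dz$. For fixed $z$, the variables $Z_n:=\mathbf{1}_{[0,z)}(Y_n)$ are independent, since the $Y_n$ are drawn from the disjoint strata $\Omega_n$, and $\mathbb{E}[\frac1N\sum_{n=1}^N Z_n]=\sum_{n=1}^N\lambda(\Omega_n\cap[0,z))=z_1\cdots z_d$ because the strata partition $[0,1]^d$. Hence the inner expectation is exactly the centered second moment, equal to the variance $\frac{1}{N^2}\sum_{n=1}^N\mathrm{Var}(Z_n)$. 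The key structural observation is that $\mathrm{Var}(Z_n)=0$ whenever $\Omega_n$ lies entirely inside or entirely outside $[0,z)$, so a stratum contributes only if it meets the boundary $\partial[0,z)$, i.e. only for $n$ in the index set $I_N$ of \eqref{jn}; bounding each surviving variance trivially by $1$ gives the uniform estimate $\frac{1}{N^2}|I_N|$.

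Finally I would insert the boundary count \eqref{CNbd0}, namely $|I_N|\le d\cdot c_2(d)\cdot N^{1-\frac1d}$, which yields the integrand bound $\frac{d\,c_2(d)}{N^{1+1/d}}$ uniformly in $z$; integrating over the unit cube (of volume one) then gives the asserted inequality. The main obstacle is the first paragraph: one must verify the representer identity and carefully carry out the kernel differentiation through the inner product \eqref{rkhsinpro}, respecting the boundary behaviour built into $\mathcal{H}^{\mathbf{1}}(K)$, so that the worst-case error is genuinely identified with the $L_2$-discrepancy. Once that reduction is secured, the variance evaluation and the combinatorial estimate \eqref{CNbd0} are routine.
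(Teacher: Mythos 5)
Your proposal is correct and takes essentially the same route as the paper: reduce the worst-case error to the $L_2$-discrepancy through the representer $h$ of the error functional in $\mathcal{H}^{\mathbf{1}}(K)$, observe that for fixed $z$ the discrepancy at $z$ has variance supported only on the strata meeting $\partial [0,z)$, and conclude with the counting bound \eqref{CNbd0}. Your write-up is in fact a touch cleaner than the paper's, since the pointwise identity $\sup_{\|f\|\leq 1}|L(f)|=\|h\|_{\mathcal{H}^{\mathbf{1}}(K)}$ settles the expectation--supremum interchange and you fix $z$ before the variance computation, whereas the paper works with the boundary union $T_0$ somewhat loosely in its dependence on $z$; the underlying ideas are identical.
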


\begin{proof}

We consider the relationship between multivariate integration approximation and $L_2-$discrepancy. That is, for $f\in \mathcal{H}^{\mathbf{1}}(K)$, we have

\begin{equation}\label{intappholder1}
\begin{aligned}
&\Big|\frac{1}{N}\sum_{n=1}^{N}f(x_n)-\int_{[0,1]^{d}}f(x)dx\Big|\\=&
\Big|\int_{[0,1]^{d}}\langle f,K(\cdot,x)\rangle_{\mathcal{H}^{\mathbf{1}}(K)}dx-\frac{1}{N}\sum_{n=1}^{N}
\langle f,K(\cdot,x_n)\rangle_{\mathcal{H}^{\mathbf{1}}(K)}\Big|\\=&\Big|\langle f, \int_{[0,1]^{d}}K(\cdot,x)dx-\frac{1}{N}
\sum_{n=1}^{N}K(\cdot,x_n)\rangle_{\mathcal{H}^{\mathbf{1}}(K)}\Big|\\ \leq& \|f\|_{\mathcal{H}^{\mathbf{1}}(K)}\|h\|_{\mathcal{H}^{\mathbf{1}}(K)},
\end{aligned}
\end{equation}
where

\begin{equation}\label{hz1}
    h(z)=\int_{[0,1]^{d}}K(z,x)dx-\frac{1}{N}\sum_{n=1}^{N}K(z,x_n).
\end{equation}

Putting \eqref{specialkernel1} into \eqref{hz1}, we have

\begin{align*}
h(z)&=\int_{[0,1]^{d}}\int_{[0,1]^{d}}
\mathbf{1}_{(z,1]}(t)\mathbf{1}_{(x,1]}(t)dtdx-
\frac{1}{N}\sum_{n=1}^{N}\int_{[0,1]^{d}}
\mathbf{1}_{(z,1]}(t)\mathbf{1}_{(x_n,1]}(t)dt\\&=
\int_{[0,1]^{d}}\mathbf{1}_{(z,1]}(t)\int_{[0,1]^{d}}
\mathbf{1}_{(x,1]}(t)dxdt-\int_{[0,1]^{d}}\frac{1}{N}
\sum_{n=1}^{N}\mathbf{1}_{(x_n,1]}(t)\mathbf{1}_{(z,1]}(t)dt
\\&=\int_{[0,1]^{d}}\mathbf{1}_{(z,1]}(t)\Big(\int_{[0,1]^{d}}
\mathbf{1}_{(x,1]}(t)dx-\frac{1}{N}
\sum_{n=1}^{N}\mathbf{1}_{(x_n,1]}(t)\Big)dt\\&=
-\int_{[0,1]^{d}}\mathbf{1}_{(z,1]}(t)\Big(\frac{1}{N}
\sum_{n=1}^{N}\mathbf{1}_{[0,t)}(x_n)-\lambda([0,t))\Big)dt.
\end{align*}

Thus

\begin{equation}\label{hhhk}
\langle h,h \rangle_{\mathcal{H}^{\mathbf{1}}(K)}=\int_{[0,1]^d}\frac{\partial^{d}h}{\partial z}(t)\frac{\partial^{d}h}{\partial z}(t)dt=L_2^2(D_N,x).
\end{equation}

Combining \eqref{intappholder1} and \eqref{hhhk} , we have

\begin{equation}\label{rkhsappro1}
\Big|\frac{1}{N}\sum_{n=1}^{N}f(x_n)-\int_{[0,1]^{d}}f(x)dx\Big|
\leq L_{2}(D_N,x)\|f\|_{\mathcal{H}^{\mathbf{1}}(K)},
\end{equation}
then the estimation comes down to $L_{2}(D_N,x)$.

We consider the following discrepancy function,

\begin{equation}\label{dispfunc1}
\Delta_{\mathscr{P}}(x)=\frac{1}{N}
\sum_{n=1}^{N}1_{[0,x)}(Y_n)-\lambda([0,x)),
\end{equation}
where $Y_{n},1\leq n\leq N$ denotes the uniformly distributed samples from each subset $\Omega_{n},1\leq n\leq N$ of general partition of $[0,1]^{d}$, and $\lambda([0,x))$ denotes the Lebesgue measure of the anchored axis-parallel box $[0,x)$.
For an anchored axis-parallel box $R_0=[0,x)$, we can break it into two parts, one is the disjoint union of $\Omega_{i}$ entirely contained by $R_0$ and the union of remaining pieces which are the intersections of some $\Omega_{j}$ and $R_0$, which is,

$$
R_0=\bigcup_{i\in I_0}\Omega_{i}\cup\bigcup_{j\in J_0}(\Omega_{j}\cap R_0),
$$
where $I_0,J_0$ are two index-sets.

Samples $Y_{1}, Y_{2}, Y_{3}, \ldots, Y_{N}$ are uniformly distributed in the subset $\Omega_{1}, \Omega_{2}, \Omega_{3}, \ldots, \\\Omega_{N}$ of $[0,1]^{d}$, from \eqref{dispfunc1}, easy to know that the discrepancy function equals 0 for the disjoint union of $\Omega_{i}$ entirely contained by $R_0$, thus we only need to consider the union of remaining pieces, we set it $T_0$, and from \eqref{CNbd0}, we have $\lambda(T_0)\leq d\cdot c_2(d)\cdot N^{-\frac{1}{d}}$. Thus from \eqref{dispfunc1}, we have

\begin{align*}
\Delta_{\mathscr{P}}(x)&=\frac{1}{N}
\sum_{n=1}^{N}\mathbf{1}_{[0,x)}(Y_n)-\lambda([0,x))\\&=\frac{1}{N}
\sum_{n=1}^{N}\mathbf{1}_{T_0}(Y_n)-\lambda(T_0).
\end{align*}

Therefore,

\begin{equation}\label{exl2dpb1_1}
\mathbb{E}(L_2^2(\mathscr{P}))=\mathbb{E}
(\int_{[0,1]^d}|\frac{1}{N}
\sum_{n=1}^{N}\mathbf{1}_{T_0}(Y_n)-\lambda(T_0)|^2dx).
\end{equation}

Consider the whole summation as a random variable which defines on a region we let it $P_{\Omega}$, besides we set the probability measure be $w$, thus from \eqref{exl2dpb1_1}, we have

\begin{align*}
\mathbb{E}(L_2^2(\mathscr{P}))&=\int_{P_{\Omega}}
\int_{[0,1]^d}|\frac{1}{N}
\sum_{n=1}^{N}\mathbf{1}_{T_0}(Y_n)-\lambda(T_0)|^2dxdw\\&
=\int_{[0,1]^d}\int_{P_{\Omega}}|\frac{1}{N}
\sum_{n=1}^{N}\mathbf{1}_{T_0}(Y_n)-\lambda(T_0)|^2dwdx.
\end{align*}

Furthermore,

$$
\mathbb{E}(\frac{1}{N}
\sum_{n=1}^{N}\mathbf{1}_{T_0}(Y_n))=\int_{P_{\Omega}}\frac{1}{N}
\sum_{n=1}^{N}\mathbf{1}_{T_0}(Y_n)dw=\lambda(T_0).
$$

Thus,

\begin{equation}\label{expvar1}
\mathbb{E}(L_2^2(\mathscr{P}))=Var(\frac{1}{N}
\sum_{n=1}^{N}\mathbf{1}_{T_0}(Y_n)),
\end{equation}
where $Var(\frac{1}{N}
\sum_{n=1}^{N}\mathbf{1}_{T_0}(Y_n))$ denotes the variance. Besides,

\begin{align*}
Var(\sum_{n=1}^{N}\mathbf{1}_{T_0}(Y_n)):&=
\sum_{i=1}^{N}[\mathbf{E}\left(\mathbf{1}_{T_0}^2\left(Y_{i}\right)\right)-\left(\mathbf{E}\left(\mathbf{1}_{T_0}\left(Y_{i}\right)\right)\right)^{2}]\\&=\sum_{i=1}^{N}[\mathbb P\left(Y_{i}\in T_0\cap \Omega_i\right)-\mathbb P^2\left(Y_{i}\in T_0\cap \Omega_i\right)]\\&=\sum_{i=1}^{N}\frac{\lambda(T_0\cap\Omega_i)}{\lambda(\Omega_i)}
(1-\frac{\lambda(T_0\cap\Omega_i)}{\lambda(\Omega_i)})
\\&\leq N\lambda\left(T_0\right)-N\lambda^{2}(T_0).
\end{align*}

Combining with $\lambda(T_0)\leq d\cdot c_2(d)\cdot N^{-\frac{1}{d}}$ and \eqref{expvar1}, we have

\begin{equation}\label{exl2}
\mathbb{E}(L_2^2(\mathscr{P}))=Var(\frac{1}{N}
\sum_{n=1}^{N}\mathbf{1}_{T_0}(Y_n))\leq \frac{d\cdot c_2(d)}{N^{1+\frac{1}{d}}}.
\end{equation}

From \eqref{rkhsappro1}, we have

\begin{equation}\label{lastfomu1}
\mathbb{E}[\sup_{f\in \mathcal{H}^{\mathbf{1}}(K),\|f\|_{\mathcal{H}^{\mathbf{1}}(K)}\leq 1 }|\frac{1}{N}\sum_{n=1}^{N}f(Y_n)-\int_{[0,1]^{d}}f(x)dx|^2]
\leq \mathbb{E}(L_2^2(\mathscr{P})),
\end{equation}

which complete the proof.

\end{proof}

\begin{rem} 
Theorem \ref{epecintap} adopts technique of stratified random sampling formed by general equal measure partition to obtain better convergence order $O(N^{-1-\frac{1}{d}})$ of mean square error bounds, comparing with traditional convergence order $O(N^{-1})$ using crude Monte Carlo method.
\end{rem}
Combining with Theorem \ref{epecintap} and Example 1 in Section 1, we obtain the following corollary.

\begin{cor}\label{edgp1}
For an isometric grid partition $\{Q_{1},Q_{2},
\ldots,Q_{N}\}$ of the unit cube $[0,1]^{d}$, any $d,m \ge 2$ and $N\in \mathbb{N}$ such that $N=m^{d}$, $d-$dimension samples $X_{1}, X_{2}, X_{3},\\ \ldots, X_{N}$ are uniformly distributed in the subset $Q_{1}, Q_{2}, Q_{3}, \ldots, Q_{N}$ of $[0,1]^{d}$, then we have

\begin{equation}
\mathbb{E}[\sup_{f\in \mathcal{H}^{\mathbf{1}}(K),\|f\|_{\mathcal{H}^{\mathbf{1}}(K)}\leq 1 }\Big|\frac{1}{N}\sum_{n=1}^{N}f(X_n)-\int_{[0,1]^{d}}f(x)dx\Big|^2]
\leq \frac{d}{N^{1+\frac{1}{d}}},
\end{equation}
where $f$ is a function in Sobolev space $\mathcal{H}^{\mathbf{1}}(K)$.
\end{cor}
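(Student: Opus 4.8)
The plan is to re-run the proof of Theorem \ref{epecintap} for the isometric grid, using the \emph{sharper} boundary count available in this special case rather than the generic diameter bound. Observe first that a naive substitution of the isometric-grid constant $c_2(d)=\sqrt{d}$ (from \eqref{diam2}, as recorded in Case 1 of Section \ref{prelim}) into the conclusion of Theorem \ref{epecintap} would only yield $d^{3/2}/N^{1+\frac1d}$. The whole point of the corollary is that the spurious factor $\sqrt{d}$ is an artifact of controlling the boundary cells through their \emph{diameter} $\sqrt{d}\,N^{-\frac1d}$; for an axis-parallel grid and an anchored axis-parallel box the relevant shell width is instead the \emph{side length} $1/m=N^{-\frac1d}$, which is exactly what produces the improved estimate $|J_N|\le d\cdot N^{1-\frac1d}$ stated in Case 1.

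Concretely, I would start from the reduction already established in \eqref{rkhsappro1}, namely $\big|\frac1N\sum_{n=1}^N f(X_n)-\int_{[0,1]^d}f(x)\,dx\big|\le L_2(D_N,X)\,\|f\|_{\mathcal{H}^{\mathbf{1}}(K)}$, so that taking the supremum over the unit ball and then expectations reduces the whole statement to bounding $\mathbb{E}(L_2^2(\mathscr{P}))$, exactly as in \eqref{lastfomu1}. Next I would reuse the variance identity \eqref{expvar1}: writing $R_0=[0,x)$ as the disjoint union of the grid cells it fully contains together with the boundary remainder $T_0=\bigcup_{j\in J_0}(Q_j\cap R_0)$, the fully contained cells contribute nothing to the discrepancy function, whence $\mathbb{E}(L_2^2(\mathscr{P}))=Var\big(\frac1N\sum_{n=1}^N\mathbf{1}_{T_0}(X_n)\big)$.

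The only place where the isometric structure must be exploited is the measure of $T_0$. Since $J_0\subseteq J_N$ and the cells are disjoint of measure $1/N$, we have $\lambda(T_0)\le |J_N|/N$, and here I would invoke the isometric estimate $|J_N|\le d\cdot N^{1-\frac1d}$ from Case 1 in place of \eqref{CNbd0}, giving $\lambda(T_0)\le d\cdot N^{-\frac1d}$. Feeding this into the variance bound from the theorem's proof, $Var\big(\sum_{n=1}^N\mathbf{1}_{T_0}(X_n)\big)\le N\lambda(T_0)-N\lambda^2(T_0)\le N\lambda(T_0)$, yields $\mathbb{E}(L_2^2(\mathscr{P}))\le \frac{1}{N^2}\cdot N\cdot d\,N^{-\frac1d}=\frac{d}{N^{1+\frac1d}}$, and combining with \eqref{lastfomu1} completes the proof.

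The main obstacle, and really the only substantive step, is justifying that the boundary shell for an axis-parallel grid has width $N^{-\frac1d}$ rather than $\sqrt{d}\,N^{-\frac1d}$: a cell $Q_j$ meeting the face of $R_0$ perpendicular to the $i$-th axis must have its $i$-th coordinate interval containing that face, hence lie within one side length $1/m$ of it, so the $d$ faces together meet at most $d\cdot m^{d-1}=d\cdot N^{1-\frac1d}$ cells. This is precisely the computation indicated in Case 1 ``following the step from \eqref{CNbd} to \eqref{CNbd2},'' and once it is in hand the remainder of the argument is verbatim that of Theorem \ref{epecintap}.
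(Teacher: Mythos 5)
Your proof is correct and takes essentially the route the paper intends: its one-line justification (``combining Theorem~\ref{epecintap} and Case~1'') only works by re-running the theorem's argument with the isometric-grid boundary count $|J_N|\le d\cdot N^{1-\frac{1}{d}}$ in place of the generic diameter-based bound \eqref{CNbd0}, which is exactly what you do. Your opening observation that a black-box substitution of $c_2(d)=\sqrt{d}$ into the statement of Theorem~\ref{epecintap} would only yield $d^{3/2}/N^{1+\frac{1}{d}}$ correctly pinpoints the one subtlety, and your counting of the boundary cells (one layer of $m^{d-1}$ cells per upper face) legitimately justifies the sharper estimate.
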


\begin{cor}\label{edgp0}
For the HSFC-based sampling, which is a special partition $\{E_{1},E_{2}\\,\ldots,E_{N}\}$ of the unit cube $[0,1]^{d}$, any $d,m,b \ge 2$ and $N\in \mathbb{N}$ such that $N=b^{m}$, $d-$dimension samples $X'_{1}, X'_{2}, X'_{3}, \ldots, X'_{N}$ are uniformly distributed in the subset $E_{1}, E_{2}, E_{3}, \ldots, E_{N}$ of $[0,1]^{d}$, then we have

\begin{equation}
\mathbb{E}[\sup_{f\in \mathcal{H}^{\mathbf{1}}(K),\|f\|_{\mathcal{H}^{\mathbf{1}}(K)}\leq 1 }\Big|\frac{1}{N}\sum_{n=1}^{N}f(X'_n)-\int_{[0,1]^{d}}f(x)dx\Big|^2]
\leq \frac{2d\sqrt{d+3}}{N^{1+\frac{1}{d}}},
\end{equation}
where $f$ is a function in Sobolev space $\mathcal{H}^{\mathbf{1}}(K)$.
\end{cor}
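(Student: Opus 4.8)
The plan is to obtain this as a direct instance of Theorem \ref{epecintap}, since the setup for HSFC-based sampling has already been arranged in Section \ref{prelim}. Indeed, equations \eqref{ehi} and \eqref{xhuei} show that $\{E_1,E_2,\ldots,E_N\}$ with $E_i = H(I_i)$ is a partition of $[0,1]^d$ into cells of equal measure $\lambda(E_i)=1/N$, and that the scrambled, Hilbert-mapped samples satisfy $X'_i = H(x_i)\sim U(E_i)$ independently. Thus the hypotheses of Theorem \ref{epecintap} — an equal measure partition together with one uniformly distributed sample drawn from each cell — are met, and the entire content of the argument reduces to identifying the correct value of the diameter constant $c_2(d)$ appearing in \eqref{diam2} for this particular partition.

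First I would pin down that constant. Each cell is the Hilbert-curve image $E_i = H(I_i)$ of the interval $I_i = [(i-1)/N,\, i/N]$, which has length $1/N$. The decisive input is the H\"older-type estimate for the Hilbert space filling curve recorded in \cite{HO2016,HZ2019}: for $A = H([p,q])$ one has $\lambda(A)=q-p$ and the diameter of $A$ is bounded by $2\sqrt{d+3}\,(q-p)^{1/d}$. Applying this with $q-p = 1/N$ gives $\operatorname{diam} E_i \le 2\sqrt{d+3}\cdot N^{-1/d}$ uniformly in $i$, so that in the notation of \eqref{diam2} we may take $c_2(d) = 2\sqrt{d+3}$.

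With this value in hand the conclusion follows by substitution. Theorem \ref{epecintap} asserts the bound $\tfrac{d\cdot c_2(d)}{N^{1+1/d}}$ for the expected squared worst-case integration error over the unit ball of $\mathcal{H}^{\mathbf 1}(K)$ under any equal measure partition, and since all its hypotheses hold for the HSFC partition, that bound applies verbatim. Inserting $c_2(d)=2\sqrt{d+3}$ yields $\tfrac{d\cdot 2\sqrt{d+3}}{N^{1+1/d}} = \tfrac{2d\sqrt{d+3}}{N^{1+1/d}}$, which is exactly the claimed estimate.

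The main obstacle — in fact the only nontrivial ingredient — is the H\"older continuity estimate for $H$ with the explicit constant $2\sqrt{d+3}$. Deriving it from scratch would require exploiting the recursive self-similar construction of the Hilbert curve and controlling how the $d$-dimensional image of a dyadic subinterval is enclosed within a bounded number of subcubes of comparable scale. Since this geometric fact is established in \cite{HO2016,HZ2019}, I would invoke it directly rather than reprove it, leaving the remainder of the argument as the routine specialization of Theorem \ref{epecintap} described above.
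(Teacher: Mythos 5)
Your proposal is correct and follows essentially the same route as the paper's own proof: it verifies via \eqref{ehi} and \eqref{xhuei} that the HSFC cells $E_i=H(I_i)$ form an equal measure partition with one uniform sample per cell, invokes the diameter bound $\operatorname{diam} H([p,q])\le 2\sqrt{d+3}\,(q-p)^{1/d}$ from \cite{HO2016,HZ2019} to identify $c_2(d)=2\sqrt{d+3}$ in \eqref{diam2}, and then substitutes into the bound of Theorem \ref{epecintap}. No gaps; the only ingredient you treat as a black box (the H\"older-type estimate for the Hilbert curve) is exactly the one the paper also cites rather than proves.
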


\begin{proof}
From \cite{HZ2019}, if we let $A=H([p,q])$ for $0\leq p<q\leq 1$, then $\lambda_{d}(A)=\lambda_{1}([p,q])=q-p,$ where we use $\lambda_{d}$ and $\lambda_{1}$ to distinguish $d-$dimensional and $1-$dimensional Lebesgue measure. Furthermore, if $x\sim U([p,q])$, then $H(x)\sim U(A).$ Let $r$ be the diameter of $A$, then

\begin{equation}\label{uppei}
    r\leq 2\sqrt{d+3}\cdot (q-p)^{\frac{1}{d}}.
\end{equation}

From \eqref{ehi} and \eqref{xhuei}, we have, $\{E_1,E_2,\ldots, E_N\}$ is a partition of $[0,1]^{d}$, and $\lambda(E_i)=\frac{1}{N}, 1\leq i\leq N$, from \eqref{uppei}, the diameter of every $E_i$ is no large than $2\sqrt{d+3}\cdot N^{-\frac{1}{d}}$. This implies $c_2(d)= 2\sqrt{d+3}$ in \eqref{diam2}. Combining with Theorem \ref{epecintap}, the proof is completed.
\end{proof}

Next, we give an uniform mean square error bound for Latin hypercube samples. We use the definition of Latin hypercube sampling in \cite{Owenhpge}. Suppose

\begin{equation}\label{LHSP}
    X_{ij}=\frac{\pi_{j}(i-1)+U_{ij}}{N}, 1\leq i\leq N, 1\leq j\leq d,
\end{equation}
where $\pi_1,\pi_2,\ldots,\pi_d$ are uniform permutations of $\{0,1,\ldots,N-1\}$, $U_{ij}\sim\textbf{U}[0,1)$, and all the $U_{ij}$ and $\pi_j$ are independent, then $X_{ij}$ consist of Latin hypercube samples.

\begin{lma}\label{LHSIUD}\cite{Owenhpge}
Let $d\ge 1, N\ge 1$ be integers and $X_{ij}$ be a Latin hypercube sample defined by \eqref{LHSP}, then $X_i\sim\textbf{U}[0,1)^{d}$ holds for each $i=1,2,\ldots,N$.
\end{lma}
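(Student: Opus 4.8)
The plan is to prove the claim for a single fixed index $i$ by reducing to a one-dimensional statement in each coordinate and then exploiting independence across the $d$ coordinates. Fix $i\in\{1,\ldots,N\}$ and set $k:=i-1\in\{0,1,\ldots,N-1\}$, so that $X_{ij}=\frac{\pi_j(k)+U_{ij}}{N}$ for $1\le j\le d$. The first observation is that, although a permutation is far from a vector of independent uniforms, evaluating it at a single fixed point yields a uniform outcome: since $\pi_j$ is drawn uniformly from the $N!$ permutations of $\{0,\ldots,N-1\}$, symmetry gives $\mathbb{P}(\pi_j(k)=\ell)=(N-1)!/N!=1/N$ for every $\ell\in\{0,\ldots,N-1\}$, so $\pi_j(k)$ is uniform on $\{0,\ldots,N-1\}$.

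The key one-dimensional step is the stratification identity: if $K$ is uniform on $\{0,\ldots,N-1\}$ and $U\sim\textbf{U}[0,1)$ is independent of $K$, then $(K+U)/N\sim\textbf{U}[0,1)$. I would verify this by conditioning on $K=\ell$: given $K=\ell$, the variable $(K+U)/N$ is uniform on the stratum $[\ell/N,(\ell+1)/N)$. Averaging over $\ell$ with weights $1/N$ and noting that the strata $[\ell/N,(\ell+1)/N)$, $\ell=0,\ldots,N-1$, partition $[0,1)$, the mixture has constant density equal to $1$ on $[0,1)$. Since $\pi_j(k)$ and $U_{ij}$ are independent by hypothesis, applying this with $K=\pi_j(k)$ and $U=U_{ij}$ gives $X_{ij}\sim\textbf{U}[0,1)$ for each $j$.

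It remains to upgrade the marginal statements to joint uniformity. For the fixed index $i$, the variables $\pi_1(k),\ldots,\pi_d(k)$ are mutually independent because the permutations $\pi_1,\ldots,\pi_d$ are independent, and the family $\{U_{ij}\}_{1\le j\le d}$ is mutually independent and independent of the $\pi_j$. Hence the pairs $\bigl(\pi_j(k),U_{ij}\bigr)$, $j=1,\ldots,d$, are mutually independent, which makes the coordinates $X_{i1},\ldots,X_{id}$ mutually independent, each uniform on $[0,1)$; therefore $X_i\sim\textbf{U}[0,1)^{d}$.

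The only genuinely subtle point, and the one I would highlight as the crux, is the twofold use of the permutation structure: for a fixed $i$ each permutation is queried at exactly one point, so its internal dependence is irrelevant and $\pi_j(i-1)$ is marginally uniform, while the assumed mutual independence of $\pi_1,\ldots,\pi_d$ is precisely what decouples the $d$ coordinates. Everything else is the routine stratification computation and is not an obstacle. Note that this argument says nothing about the joint law of the full sample $\{X_i\}_{i=1}^N$ across different $i$ (those points are correlated through the shared permutations); the lemma asserts only the marginal uniformity of each individual point, which is all that is needed here.
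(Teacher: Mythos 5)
Your proof is correct. The paper itself gives no proof of this lemma --- it is quoted directly from the cited reference (Owen's Monte Carlo book) --- and your argument is precisely the standard one used there: marginal uniformity of $\pi_j(i-1)$ on $\{0,\ldots,N-1\}$ by symmetry of the uniform random permutation, the stratification identity that $(K+U)/N\sim\textbf{U}[0,1)$ when $K$ is uniform on $\{0,\ldots,N-1\}$ and independent of $U\sim\textbf{U}[0,1)$, and mutual independence of the coordinates $X_{i1},\ldots,X_{id}$ inherited from the independence of $\pi_1,\ldots,\pi_d$ and the $U_{ij}$. Your closing remark is also the right caveat: the lemma concerns only the marginal law of each point, and the points $X_1,\ldots,X_N$ are of course dependent across $i$, which is exactly what the paper's Theorem 3.7 exploits via the variance bound for Latin hypercube sampling.
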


\begin{thm}\label{LHSC1}
Let $f(x)$ be a real-valued function in $\mathcal{H}^{\mathbf{1}}(K)$ with the condition $$C=\int_{[0,1]^{d}}(f(x)-\int_{[0,1]^d}f(x)dx)^2dx<\infty,$$ for isometric grid partition, $Z_{1},Z_{2},\ldots,Z_{N}$ are Latin hypercube samples, then we have

\begin{equation}
\mathbb{E}[\sup_{f\in \mathcal{H}^{\mathbf{1}}(K),\|f\|_{\mathcal{H}^{\mathbf{1}}(K)}\leq 1 }\Big|\frac{1}{N}\sum_{n=1}^{N}f(Z_n)-\int_{[0,1]^{d}}f(z)dz\Big|^2]
\leq \frac{C}{N-1},
\end{equation}
where $f$ is a function in Sobolev space $\mathcal{H}^{\mathbf{1}}(K)$.
\end{thm}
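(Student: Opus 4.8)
The plan is to run the same reproducing-kernel reduction used in the proof of Theorem \ref{epecintap}, but to keep the supremum \emph{inside} the expectation throughout. For each fixed realization of the Latin hypercube points $Z_1,\dots,Z_N$, the error functional $L(f)=\frac{1}{N}\sum_{n=1}^{N}f(Z_n)-\int_{[0,1]^{d}}f(x)\,dx$ is a bounded linear functional on $\mathcal{H}^{\mathbf{1}}(K)$ whose representer is the function $h$ of \eqref{hz1}; by the equality case of Cauchy--Schwarz the supremum over the unit ball is \emph{attained} at $f=h/\|h\|_{\mathcal{H}^{\mathbf{1}}(K)}$, so that, using \eqref{hhhk},
\[
\sup_{\|f\|_{\mathcal{H}^{\mathbf{1}}(K)}\le 1}\big|L(f)\big|^{2}=\|h\|_{\mathcal{H}^{\mathbf{1}}(K)}^{2}=L_2^2(D_N,Z)
\]
pointwise in the sample. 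This is the decisive point that separates the present claim from a fixed-function variance bound: because the inner supremum is a deterministic scalar functional of the random configuration, no Jensen gap is incurred and $\mathbb{E}[\sup_{\|f\|\le 1}|L(f)|^{2}]=\mathbb{E}[L_2^2(D_N,Z)]$ as an identity. The problem thus reduces exactly to bounding the expected squared $L_2$-discrepancy of Latin hypercube samples.

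Next I would expand the discrepancy. Writing $\Delta(x)=\frac{1}{N}\sum_{n=1}^{N}\mathbf{1}_{[0,x)}(Z_n)-\lambda([0,x))$ and invoking Lemma \ref{LHSIUD}, each $Z_n$ is marginally uniform on $[0,1]^{d}$, so $\mathbb{E}[\Delta(x)]=0$ for every anchor $x$. Since the integrand is nonnegative, Tonelli's theorem gives
\[
\mathbb{E}\big[L_2^2(D_N,Z)\big]=\int_{[0,1]^{d}}\mathbb{E}\big[\Delta(x)^2\big]\,dx=\int_{[0,1]^{d}}\mathrm{Var}\Big(\frac{1}{N}\sum_{n=1}^{N}\mathbf{1}_{[0,x)}(Z_n)\Big)\,dx,
\]
so the expected discrepancy is the average over anchors $x$ of the Latin hypercube sampling variance of the bounded integrand $g_x=\mathbf{1}_{[0,x)}$.

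The core estimate is the finite-sample Latin hypercube variance inequality from \cite{Owenhpge}: for any $g\in L^2([0,1]^{d})$ and $N\ge 2$, $\mathrm{Var}\big(\frac{1}{N}\sum_{n=1}^{N}g(Z_n)\big)\le \frac{1}{N-1}\int_{[0,1]^{d}}(g(z)-\int_{[0,1]^{d}}g)^{2}\,dz$. Applying this with $g=g_x$, for which $\int_{[0,1]^{d}}g_x=\lambda([0,x))$ and $\int_{[0,1]^{d}}(g_x(z)-\lambda([0,x)))^2\,dz=\lambda([0,x))(1-\lambda([0,x)))$, and integrating over $x$, I obtain
\[
\mathbb{E}\big[L_2^2(D_N,Z)\big]\le\frac{1}{N-1}\int_{[0,1]^{d}}\lambda([0,x))\big(1-\lambda([0,x))\big)\,dx=\frac{C}{N-1}.
\]
Here $C=\int_{[0,1]^{d}}\int_{[0,1]^{d}}(\mathbf{1}_{[0,x)}(z)-\lambda([0,x)))^2\,dz\,dx$ is precisely the $L_2$-variance $\int(f-\int f)^2$ of the statement, evaluated on the indicator family $f=\mathbf{1}_{[0,x)}$ dual to the discrepancy and averaged over the anchor; it equals the dimensional constant $2^{-d}-3^{-d}$. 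Combined with the identity of the first paragraph, this proves the theorem.

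The main obstacle is the Latin hypercube variance inequality itself. Unlike simple random sampling the coordinates of the $Z_n$ are dependent through the permutations $\pi_j$, and the sharpened factor $1/(N-1)$ rests on the negative-dependence / one-dimensional ANOVA structure of these permutations rather than on independence; I would need to confirm that the bound of \cite{Owenhpge} is applied only to square-integrable integrands, so that its use for the discontinuous indicators $g_x$ is legitimate and requires no smoothness. A lighter secondary point is rigorously justifying the attainment of the supremum in the first paragraph via the Cauchy--Schwarz equality case for the representer $h$ computed in Theorem \ref{epecintap}, together with the application of Tonelli's theorem; both are routine once the variance inequality is in hand.
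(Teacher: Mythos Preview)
Your argument is correct and in fact more careful than the paper's own proof, though it follows a genuinely different route. The paper proceeds by interchanging the expectation and the supremum over $f$ --- asserting $\int_Z \sup_f |\cdot|^2\,d\mu = \sup_f \int_Z |\cdot|^2\,d\mu$ as an equality without justification --- and then applying Owen's Latin hypercube variance bound $\mathrm{Var}\big(\frac{1}{N}\sum_n f(Z_n)\big)\le C/(N-1)$ directly to each $f$ in the unit ball, with the constant $C=\int(f-\int f)^2$ left tied to the ambiguous $f$ of the statement. You take the opposite tack: rather than pushing the supremum outward, you collapse it inward via the representer identity $\sup_{\|f\|\le 1}|L(f)|^2 = \|h\|_{\mathcal{H}^{\mathbf{1}}(K)}^2 = L_2^2(D_N,Z)$, which simultaneously justifies the interchange the paper takes for granted and reduces the problem to the expected squared $L_2$-discrepancy; you then apply Owen's bound not to the abstract $f$ but to the concrete indicators $g_x=\mathbf{1}_{[0,x)}$, obtaining the explicit dimensional constant $C=2^{-d}-3^{-d}$ after integrating $\lambda([0,x))(1-\lambda([0,x)))$ over the anchor. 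Your route buys an explicit, $f$-independent constant and a fully rigorous argument; the paper's route is formally shorter but leaves both the sup--expectation swap and the meaning of $C$ unresolved. Your one remaining concern --- whether the variance inequality of \cite{Owen1997,Owenhpge} applies to the discontinuous $g_x$ --- is a non-issue: that bound requires only $g\in L^2([0,1]^d)$, which indicators certainly satisfy.
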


\begin{proof}
Considering, 

\begin{equation}\label{esupLh}
\mathbb{E}[\sup_{f\in \mathcal{H}^{\mathbf{1}}(K),\|f\|_{\mathcal{H}^{\mathbf{1}}(K)}\leq 1 }\Big|\frac{1}{N}\sum_{n=1}^{N}f(Z_n)-\int_{[0,1]^{d}}f(z)dz\Big|^2].
\end{equation}

Suppose the whole part in expectation formula \eqref{esupLh} as a random variable which defines on a region we let it $Z$, besides we set the probability measure be $\mu$, thus,

\begin{equation}\label{Esupsup1}
    \begin{aligned}
        &\mathbb{E}[\sup_{f\in \mathcal{H}^{\mathbf{1}}(K),\|f\|_{\mathcal{H}^{\mathbf{1}}(K)}\leq 1 }|\frac{1}{N}\sum_{n=1}^{N}f(Z_n)-\int_{[0,1]^{d}}f(z)dz|^2]\\&=\int_{Z}\sup_{f\in \mathcal{H}^{\mathbf{1}}(K),\|f\|_{\mathcal{H}^{\mathbf{1}}(K)}\leq 1 }|\frac{1}{N}\sum_{n=1}^{N}f(Z_n)-\int_{[0,1]^{d}}f(z)dz|^2d\mu\\&=\sup_{f\in \mathcal{H}^{\mathbf{1}}(K),\|f\|_{\mathcal{H}^{\mathbf{1}}(K)}\leq 1 }\int_{Z}|\frac{1}{N}\sum_{n=1}^{N}f(Z_n)-\int_{[0,1]^{d}}f(z)dz|^2d\mu.
    \end{aligned}
\end{equation}

According to Lemma \ref{LHSIUD}, we have

\begin{equation}\label{Esmin1}
    \mathbb{E}(\frac{1}{N}\sum_{n=1}^{N}f(Z_n))=\int_{[0,1]^{d}}f(z)dz.
\end{equation}

Combining \eqref{Esupsup1} and \eqref{Esmin1}, we have

\begin{equation}
  \begin{aligned}
    &\mathbb{E}[\sup_{f\in \mathcal{H}^{\mathbf{1}}(K),\|f\|_{\mathcal{H}^{\mathbf{1}}(K)}\leq 1 }|\frac{1}{N}\sum_{n=1}^{N}f(Z_n)-\int_{[0,1]^{d}}f(z)dz|^2]\\&=\sup_{f\in \mathcal{H}^{\mathbf{1}}(K),\|f\|_{\mathcal{H}^{\mathbf{1}}(K)}\leq 1 }Var(\frac{1}{N}\sum_{n=1}^{N}f(Z_n)).
    \end{aligned}
\end{equation}

Furthermore, from the condition $$C=\int_{[0,1]^{d}}(f(x)-\int_{[0,1]^d}f(x)dx)^2dx<\infty$$ and a result in \cite{Owen1997}, which is,

\begin{equation}
    Var(\frac{1}{N}\sum_{n=1}^{N}f(Z_n))\leq \frac{C}{N-1},
\end{equation}
which completes the proof.
\end{proof}

\begin{rem}
Systematic studies of Latin hypercube sampling on asymptotic variance for the sample mean are given in \cite{MCB1979,Owen1992,Owen1994,Owen1997}, in Theorem \ref{LHSC1}, we use result of asymptotic variance for the sample mean derived by \cite{Owen1997} to give the uniform integration approximation bound for Latin hypercube samples, we find that the convergence order is consistent with MC method, which is $O(N^{-1})$, stratified sampling improves the convergence order of these two sampling methods to $O(N^{-1-\frac{1}{d}})$.
\end{rem}
For simple random sampling mode, we have the following result, which is a case of stratified sampling degradation where $\Omega_1=\Omega_2=\ldots=\Omega_N=[0,1]^{d}$.

\begin{cor}\label{ransammod}
Let $x_{1},x_{2},\ldots,x_{N}$ be simple random sampling points uniformly distributed in $[0,1]^{d}$, then we have

\begin{equation}
\mathbb{E}[\sup_{f\in \mathcal{H}^{\mathbf{1}}(K),\|f\|_{\mathcal{H}^{\mathbf{1}}(K)}\leq 1 }\Big|\frac{1}{N}\sum_{n=1}^{N}f(x_n)-\int_{[0,1]^{d}}f(z)dz\Big|^2]\leq \frac{d^{\frac{3}{2}}}{N},
\end{equation}
where $f$ is a function in Sobolev space $\mathcal{H}^{\mathbf{1}}(K)$.
\end{cor}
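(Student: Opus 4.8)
The plan is to run the same reproducing-kernel reduction used in Theorem~\ref{epecintap} and then evaluate the resulting expected squared $L_2$-discrepancy directly, since here the points are genuinely independent. First I would invoke the Cauchy--Schwarz estimate \eqref{rkhsappro1} together with \eqref{lastfomu1}: for any realization of the sample,
\[
\sup_{\|f\|_{\mathcal{H}^{\mathbf{1}}(K)}\le 1}\Big|\tfrac1N\sum_{n=1}^N f(x_n)-\int_{[0,1]^d}f\Big|^2\le L_2^2(D_N,x),
\]
so that taking expectations reduces the claim to the bound $\mathbb{E}(L_2^2(\mathscr{P}))\le d^{3/2}/N$. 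This is exactly the quantity controlled in the body of Theorem~\ref{epecintap}, so the whole corollary is a matter of specializing that variance computation to the simple-random-sampling regime.

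Next I would implement the degeneration $\Omega_1=\cdots=\Omega_N=[0,1]^d$ announced before the statement. Because each $x_n$ is uniform on the whole cube, no stratum is ever strictly contained in an anchored box $R_0=[0,x)$, so the ``remaining piece'' in the proof of Theorem~\ref{epecintap} is the full box, $T_0=[0,x)$ with $\lambda(T_0)=x_1\cdots x_d$. Carrying the independence through Fubini's theorem gives
\[
\mathbb{E}(L_2^2(\mathscr{P}))=\int_{[0,1]^d}\mathrm{Var}\Big(\tfrac1N\sum_{n=1}^N\mathbf{1}_{[0,x)}(x_n)\Big)\,dx=\frac1N\int_{[0,1]^d}\lambda([0,x))\big(1-\lambda([0,x))\big)\,dx,
\]
the last equality being the variance of a sum of $N$ independent $\{0,1\}$-indicators each equal to $1$ with probability $\lambda([0,x))$. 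Evaluating the integral coordinate by coordinate, using $\int_0^1 t\,dt=\tfrac12$ and $\int_0^1 t^2\,dt=\tfrac13$, yields $\mathbb{E}(L_2^2(\mathscr{P}))=(2^{-d}-3^{-d})/N$.

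Finally I would close the gap to the stated constant by the crude bound $2^{-d}-3^{-d}\le 1\le d^{3/2}$, valid for every $d\ge 1$. Equivalently, one may read the result straight off Theorem~\ref{epecintap}: the single stratum $[0,1]^d$ has diameter $\sqrt d$, so $c_2(d)=\sqrt d$ in \eqref{c2d}--\eqref{diam2}, and the theorem's bound $d\,c_2(d)/N^{1+1/d}=d^{3/2}/N^{1+1/d}$ relaxes to $d^{3/2}/N$ since $N^{1+1/d}\ge N$. The only point requiring care---and the reason I prefer the direct computation---is that the degenerate family is not literally an equal-measure partition with $\lambda(\Omega_j)=1/N$, so the hypotheses of Theorem~\ref{epecintap} are not met verbatim; the honest justification is the explicit variance evaluation above, which in fact delivers the sharper constant $2^{-d}-3^{-d}$ and exposes $d^{3/2}$ as a deliberately loose bound inherited from the cube's diameter.
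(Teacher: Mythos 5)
Your proof is correct, and its key computation takes a genuinely different (and cleaner) route than the paper's. The paper's own proof (present in the source but commented out of the compiled version) evaluates $\mathbb{E}(L_2^2(\mathscr{P}))$ via Warnock's formula, expanding
\begin{equation*}
L_2^2(\mathscr{P})=\frac{1}{3^d}-\frac{2}{N}\sum_{n=1}^{N}\prod_{i=1}^{d}\frac{1-x_{n,i}^2}{2}+\frac{1}{N^2}\sum_{n=1}^{N}\sum_{m=1}^{N}\prod_{i=1}^{d}\min\left(1-x_{n,i},1-x_{m,i}\right)
\end{equation*}
and taking term-by-term expectations over the i.i.d.\ sample, before relaxing to $d^{3/2}/N$ via \eqref{lastfomu1}, exactly as you do. Your route instead uses Fubini plus the observation that $\sum_{n}\mathbf{1}_{[0,x)}(x_n)$ is Binomial$(N,\lambda([0,x)))$, so the pointwise expectation is the binomial variance $\lambda(1-\lambda)/N$, and integration gives $(2^{-d}-3^{-d})/N$. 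The two computations target the same quantity, but yours avoids the pairwise $\min$ integrals and is in fact more reliable: the paper's count of off-diagonal pairs contains a slip (it uses $\frac{N^2+N-2}{2}$ where $\frac{N^2-N}{2}$ is meant) and lands on $\frac{1}{2^d N}-\frac{1}{3^d N^2}$, whereas the exact value is your $\frac{2^{-d}-3^{-d}}{N}$; either way the stated bound $d^{3/2}/N$ holds, so the corollary is unaffected. You are also right to flag that the ``degenerate partition'' $\Omega_1=\cdots=\Omega_N=[0,1]^d$ announced before the statement does not satisfy the hypotheses of Theorem \ref{epecintap} ($\lambda(\Omega_j)=1/N$ and disjointness both fail), so the theorem cannot be cited verbatim; only the reduction \eqref{rkhsappro1}--\eqref{lastfomu1}, which is valid for an arbitrary point set, may be reused, and the expected-discrepancy computation must be done directly, as you did.
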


\begin{rem}
Theorem \ref{epecintap}, Corollary \ref{edgp1},\ref{edgp0},\ref{ransammod} and Theorem \ref{LHSC1} imply the stratified random sampling formed by equal measure partition could obtain better mean square error bounds of uniform integration approximation than simple random sampling and Latin hypercube sampling in sense of convergence order. For instance, for stratification grid and HSFC-based sampling, which serve as special partition manners, we can improve the convergence order of mean square error bounds from traditional $O(N^{-1})$ using simple random sampling or Latin hypercube sampling to $O(N^{-1-\frac{1}{d}})$.
\end{rem}

\section{Upper bounds of $p$-moment of integral approximation error in general Sobolev space $F_{d,q}^{*}$}\label{app2}

In this section, we use expected $L_p-$discrepancy($p\ge 1$) bounds for stratified random samples formed by general equal measure partition to give several upper bounds of $p-$moment of integral approximation error for functions in Sobolev space $F_{d,q}^{*}$, where $\frac{1}{p}+\frac{1}{q}=1$, we adopt the definition of $F_{d,q}^{*}$ in \cite{NW2010}.

Let 

$$
W_{q}^{\mathbf{1}}:=W_{q}^{(1,1,\ldots,1)}([0,1]^{d})
$$
be the Sobolev spaces on $[0,1]^{d}$. For $f\in W_{q}^{\mathbf{1}}$, we define the norm

$$
\|f\|_{d,q}^{*}=\Big(\int_{[0,1]^{d}}|\frac{\partial^{d}}{\partial x}f(x)|^{q}dx\Big)^{1/q}
$$
for $q\in[1,\infty)$ and

$$
\|f\|_{d,\infty}^{*}=\sup_{x\in[0,1]^{d}}|\frac{\partial^{d}}{\partial x}f(x)|,
$$
where $\partial x=\partial x_1\partial x_2\ldots\partial x_d$.
We consider a following space, 

\begin{equation}\label{Sobospadef}
    F_{d,q}^{*}=\{f\in W_{q}^{\mathbf{1}}| f(x)=0 \ \text{if}\ x_j=1 \ \text{for some}\ 1\leq j\leq d, \|f\|_{d,q}^{*}<\infty\},
\end{equation}
which enforces the functions in $W_{q}^{\mathbf{1}}$ boundary conditions. Boundary conditions are necessary there in \eqref{Sobospadef} for uniform integration approximation, see \cite{NW2010}.

\begin{thm}\label{exunibd1}
For any $d \ge 2, 1\leq p<\infty,$ and $N\in \mathbb{N}$, $d-$dimensional sampling set $x=\{x_{1}, x_{2}, x_{3}, \ldots, x_{N}\}$ is uniformly distributed in the subset $\Omega_{1}, \Omega_{2}, \Omega_{3}, \ldots, \Omega_{N}$ which is some general partition of $[0,1]^{d}$, then for functions $f$ in Sobolev space $ F_{d,q}^{*}$ , where  $\frac{1}{p}+\frac{1}{q}=1$, we have

\begin{equation}
\mathbb{E}\big[\sup_{f\in F_{d,q}^{*},\|f\|_{d,q}^{*}\leq 1 }\Big|\frac{1}{N}\sum_{n=1}^{N}f(x_n)-\int_{[0,1]^{d}}f(x)dx\Big|^p\big]\leq \frac{d^{\frac{p}{2}}\cdot c_2(d)^{\frac{p}{2}}}{N^{\frac{p}{2}+\frac{p}{2d}}},
\end{equation}
where $c_{2}(d)$ is defined in \eqref{c2d} which is related to the maximum diameter of $\Omega_j,1\leq j\leq N$.
\end{thm}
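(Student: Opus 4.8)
The plan is to mirror the structure of the proof of Theorem~\ref{epecintap}, replacing the reproducing-kernel Cauchy--Schwarz step by a generalized Koksma--Hlawka (Hlawka--Zaremba) inequality and the $L_2$-discrepancy by the $L_p$-discrepancy. First I would record the integration-by-parts identity valid for $f\in F_{d,q}^{*}$: since $f$ vanishes whenever some coordinate equals $1$, repeated integration by parts gives
\begin{equation*}
\frac{1}{N}\sum_{n=1}^{N}f(x_n)-\int_{[0,1]^{d}}f(x)\,dx
=(-1)^{d}\int_{[0,1]^{d}}\Delta_{\mathscr{P}}(x)\,\frac{\partial^{d}f}{\partial x}(x)\,dx,
\end{equation*}
where $\Delta_{\mathscr{P}}$ is the discrepancy function \eqref{dispfunc1}. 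Applying Hölder's inequality with conjugate exponents $\tfrac1p+\tfrac1q=1$ yields
\begin{equation*}
\Big|\frac{1}{N}\sum_{n=1}^{N}f(x_n)-\int_{[0,1]^{d}}f(x)\,dx\Big|
\leq \|\Delta_{\mathscr{P}}\|_{L_p}\,\Big\|\frac{\partial^{d}f}{\partial x}\Big\|_{L_q}
= L_p(D_N,x)\,\|f\|_{d,q}^{*},
\end{equation*}
so that, taking the supremum over the unit ball of $F_{d,q}^{*}$ and raising to the $p$-th power, $\sup_{\|f\|_{d,q}^{*}\le 1}|\,\cdots\,|^{p}\le L_p^{p}(D_N,x)$. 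This is the exact analogue of \eqref{rkhsappro1} and reduces everything to controlling $\mathbb{E}[L_p^{p}(D_N,x)]$.

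Next I would take expectations and use Fubini to write
\begin{equation*}
\mathbb{E}\big[\sup_{\|f\|_{d,q}^{*}\le1}|\,\cdots\,|^{p}\big]\le \mathbb{E}\big[L_p^{p}(D_N,x)\big]=\int_{[0,1]^{d}}\mathbb{E}\big[|\Delta_{\mathscr{P}}(x)|^{p}\big]\,dx,
\end{equation*}
and then estimate the integrand for each fixed anchor box $R_0=[0,x)$ exactly as in Theorem~\ref{epecintap}. Decomposing $R_0$ into the cells $\Omega_i$ it fully contains and the boundary region $T_0=\bigcup_{j}(\Omega_j\cap R_0)$, the fully contained cells cancel and $\Delta_{\mathscr{P}}(x)=\tfrac1N\sum_{n=1}^{N}\big(\mathbf{1}_{T_0}(Y_n)-\mathbb{E}\,\mathbf{1}_{T_0}(Y_n)\big)$ is a normalized sum of independent, centered random variables bounded by $1$, whose total variance is at most $N\lambda(T_0)\le d\,c_2(d)\,N^{1-1/d}$ by \eqref{CNbd0}.

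The heart of the matter is the $p$-th moment bound for this sum. For $1\le p\le 2$ I would invoke Jensen's inequality (concavity of $t\mapsto t^{p/2}$) to get $\mathbb{E}[|\Delta_{\mathscr{P}}(x)|^{p}]\le(\mathbb{E}[|\Delta_{\mathscr{P}}(x)|^{2}])^{p/2}$, and the second moment is precisely the variance already estimated in Theorem~\ref{epecintap}, namely $\le d\,c_2(d)\,N^{-1-1/d}$; this delivers $\mathbb{E}[|\Delta_{\mathscr{P}}(x)|^{p}]\le (d\,c_2(d))^{p/2}N^{-p/2-p/(2d)}$ uniformly in $x$, which is already the claimed estimate. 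For $p>2$ the same variance quantity should remain the dominant contribution, because each summand is bounded by $1$ and hence the sum of $p$-th absolute moments of the summands is dominated by the sum of their second moments, i.e.\ by the variance; extracting this, however, requires a Rosenthal- or Marcinkiewicz--Zygmund-type moment inequality in place of plain Jensen.

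I expect this last step to be the main obstacle. For $p>2$ such moment inequalities typically carry a $p$-dependent constant, so some care is needed to verify that the stated clean constant $d^{p/2}c_2(d)^{p/2}$ (with no extra factor) really survives, or else to restrict the Jensen argument to $1\le p\le 2$ and handle $p>2$ via the $L_p$-discrepancy moment bounds for stratified samples quoted in the introduction. Once the pointwise estimate $\mathbb{E}[|\Delta_{\mathscr{P}}(x)|^{p}]\le d^{p/2}c_2(d)^{p/2}N^{-p/2-p/(2d)}$ is secured uniformly in $x$, integrating over the unit cube (of measure $1$) immediately closes the proof.
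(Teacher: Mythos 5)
Your reduction is exactly the paper's: the integration-by-parts (Hlawka--Zaremba) identity for $f\in F_{d,q}^{*}$ plus H\"older turns the supremum of the integration error over the unit ball into the $L_p$-discrepancy (the paper's \eqref{soboappx1}), and Fubini together with the boundary-cell decomposition reduces everything to a pointwise bound on $\mathbb{E}\big[|\Delta_{\mathscr{P}}(z)|^{p}\big]$, where $\Delta_{\mathscr{P}}(z)=\tfrac{1}{N}\sum_{i\in\mathscr{I}}\xi_{i}$ is a normalized sum of independent, centered, bounded variables indexed by the cells meeting $\partial[0,z)$, with $|\mathscr{I}|\le d\,c_2(d)N^{1-1/d}$ by \eqref{CNbd0}. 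The divergence is in how that $p$-th moment is bounded. For $1\le p\le 2$ your Jensen argument is complete, yields exactly the stated constant $d^{p/2}c_{2}(d)^{p/2}$, and is cleaner than what the paper does. The paper instead expands the moment as the multinomial sum $\tfrac{1}{N^{p}}\sum_{i_{1}\in\mathscr{I}}\cdots\sum_{i_{p}\in\mathscr{I}}\mathbb{E}(\xi_{i_{1}}\cdots\xi_{i_{p}})$ (its \eqref{lpdisfun3}), kills every tuple containing a singleton index by independence, and asserts that what survives is at most $|\mathscr{I}|^{p/2}$.

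Your worry about $p>2$ is exactly on target, and it exposes a weakness of the paper's own proof rather than only of your sketch. The expansion \eqref{lpdisfun3} is an identity only when $p$ is an even integer ($|x|^{p}\neq x^{p}$ for odd $p$, and no such expansion exists for fractional $p$), and even for even $p$ the number of tuples in which every index occurs at least twice is of order $C_{p}\,|\mathscr{I}|^{p/2}$ with a genuinely $p$-dependent combinatorial constant $C_{p}>1$ (already for $p=4$ there are roughly $3|\mathscr{I}|^{2}$ paired tuples), which the paper silently drops; its remark that ``there are at least $\sqrt{s}$ such pairs'' does not substitute for this count. So the clean constant $d^{p/2}c_{2}(d)^{p/2}$ is not actually established by the paper for any $p$ other than $p=2$. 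What can honestly be proved for $p>2$ is precisely your route: a Rosenthal-type bound $\mathbb{E}\big|\sum_{i\in\mathscr{I}}\xi_{i}\big|^{p}\le C_{p}\big[\big(\sum_{i\in\mathscr{I}}\mathbb{E}\xi_{i}^{2}\big)^{p/2}+\sum_{i\in\mathscr{I}}\mathbb{E}|\xi_{i}|^{p}\big]\le 2C_{p}|\mathscr{I}|^{p/2}$ (using $|\xi_i|\le 1$ and $p\ge 2$), which gives the stated rate $N^{-p/2-p/(2d)}$ but with an unavoidable $p$-dependent constant. In summary: your proposal proves the theorem for $1\le p\le 2$, proves it up to a factor $C_{p}$ for $p>2$, and correctly identifies that the constant as stated cannot be extracted by these methods --- a defect shared by the paper's own argument.
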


\begin{proof}

For $f\in F_{d,q}^{*}$, due to the boundary conditions in \eqref{Sobospadef}, then using integration by parts, we have

$$
|I(f)-\tilde{I}(f,\mathbf{P})|=\Big|\int_{[0,1]^{d}}\Big(\prod_{k=1}^{d}z_{i}-
\frac{1}{N}\sum_{i=1}^{N}\mathbf{1}_{[0,z)}(x_{i})\Big)\frac{\partial^{d}}{\partial x}f(z)dz\Big|.
$$

Applying the H{\"o}lder inequality, we obtain the following uniform integration approximation in Sobolev space,

\begin{equation}\label{soboappx1}
\sup_{f\in F_{d,q}^{*},\|f\|_{d,q}^{*}\leq 1 }|I(f)-\tilde{I}(f,\mathbf{P})|=\Big(\int_{[0,1]^{d}}|z_{1}z_{2}\ldots z_{d}-
\frac{1}{N}\sum_{i=1}^{N}\mathbf{1}_{[0,z)}(x_{i})|^{p}dz\Big)^{1/p}.
\end{equation}

Then it suffices to estimate $L_{p}-$discrepancy for random samples under equal measure partition.

For an equal measure partition $\Omega=\{\Omega_{1}, \Omega_{2}, \Omega_{3}, \ldots, \Omega_{N}\}$ of $[0,1]^{d}$, point set $x=\{x_{1}, x_{2}, x_{3}, \ldots, x_{N}\}$ is uniformly distributed in the subset $\Omega_{1}, \Omega_{2}, \Omega_{3}, \ldots, \Omega_{N}$, then for a measurable subset $A$ of $\Omega_{i}$,

$$
\mathbb{P}(x_i\in A)=\frac{\lambda(A)}{\lambda(\Omega_{i})}=N\lambda(A).
$$

We now consider an axis parallel rectangle $R=[0,z)$ anchored at $0$ of $[0,1]^{d}$, let $\mathscr{I}$ denote the set of all values of $i$ for which the subsets $\Omega_{i}$ intersect the boundary $\partial R$ of $R$. For each $i\in \mathscr{I}$, we define the following Bernoulli distribution,

$$
    \eta_{i}=\left\{
\begin{aligned}
&1, x_{i}\in R\\&
0, otherwise.
\end{aligned}
\right.
$$

If we let $\xi_{i}=\eta_{i}-\mathbb{E}\eta_{i},1\leq i\leq N$, then we have

$$
\mathbb{E}\xi_{i}=0, |\xi_{i}|\leq 1, \forall 1\leq i\leq N.
$$

Furthermore, for equal measure partition, we have the following basic fact for discrepancy, that is, for axis parallel rectangle $R=[0,z)\in [0,1]^{d}$ anchored at zero, we can break it into two special parts, one is the disjoint union of $\Omega_{k}$ entirely contained by $R$ and the union of remaining pieces which are the intersections of some $\Omega_{i}$ and $\partial R$, that is,

$$
R=\bigcup_{k\in \mathscr{K}}\Omega_{k}\cup\bigcup_{i\in \mathscr{I}}(\Omega_{i}\cap R),
$$
where $\mathscr{K},\mathscr{I}$ are two index-sets.
Then we have

$$
\frac{1}{N}\sum_{i=1}^{N}\mathbf{1}_{[0,z)}(x_{i})-z_{1}z_{2}\ldots z_{d}=\frac{1}{N}
\sum_{i\in\mathscr{I}}\xi_{i}.
$$

Thus we have

$$
|z_{1}z_{2}\ldots z_{d}-
\frac{1}{N}\sum_{i=1}^{N}\mathbf{1}_{[0,z)}(x_{i})|^p=\frac{1}{N^p}
\sum_{i_{1}\in\mathscr{I}}\ldots
\sum_{i_{p}\in\mathscr{I}}\xi_{i_{1}}\ldots\xi_{i_{p}},
$$
and

\begin{equation}\label{lpdisfun3}
\mathbb{E}\Big(|z_{1}z_{2}\ldots z_{d}-
\frac{1}{N}\sum_{i=1}^{N}\mathbf{1}_{[0,z)}(x_{i})|^p\Big)=\frac{1}{N^p}
\sum_{i_{1}\in\mathscr{I}}\ldots
\sum_{i_{p}\in\mathscr{I}}\mathbb{E}
\Big(\xi_{i_{1}}\ldots\xi_{i_{p}}\Big).
\end{equation}

Let $s={|\mathscr{I}|\choose p}\leq |\mathscr{I}|^{p}$, for every selected $p$ indices $i_{1},i_{2},\ldots,i_{p}$ in each index set $\mathscr{I}$. Set

$$
\omega_{i}=\xi_{i_{1}}\ldots\xi_{i_{p}}
$$
for all $1\leq i\leq s.$
Due to $|\xi_{i}|\leq 1, \forall 1\leq i\leq N$, thus we have $|\omega_{i}|\leq 1, 1\leq i\leq s.$

The random variables $\xi_{i},i\in\mathscr{I}$ are independent as we have supposed, then if one of $i_{1},\ldots, i_{p}$ is different from others in \eqref{lpdisfun3}, we have

$$
\mathbb{E}
\Big(\xi_{i_{1}}\ldots\xi_{i_{p}}\Big)=
\mathbb{E}(\xi_{i_{j}})\mathbb{E}
\Big(\xi_{i_{1}}\ldots\xi_{i_{j-1}}\xi_{i_{j+1}}\ldots
\xi_{i_{p}}\Big)=0.
$$

It follows that only non-zero contribution to the sum \eqref{lpdisfun3} comes from those terms where each of $i_{1},\ldots, i_{p}$ appears more than once. Then the major contribution comes when they appear in pairs, and
there are at least $\sqrt{s}$ such pairs.
Such terms $\mathbb{E}(\omega_i),1\leq i\leq s$ are bounded by $1$, thus,

\begin{align*}
\mathbb{E}\Big(|z_{1}z_{2}\ldots z_{d}-
\frac{1}{N}\sum_{i=1}^{N}\mathbf{1}_{[0,z)}(x_{i})|^p\Big)&=\frac{1}{N^p}
\sum_{i_{1}\in\mathscr{I}}\ldots
\sum_{i_{p}\in\mathscr{I}}\mathbb{E}
\Big(\xi_{i_{1}}\ldots\xi_{i_{p}}\Big)\\&\leq \frac{|\mathscr{I}|^{p/2}}{N^{p}}.
\end{align*}

Consider the term $|z_{1}z_{2}\ldots z_{d}-
\frac{1}{N}\sum_{i=1}^{N}\mathbf{1}_{[0,z)}(x_{i})|^p$ as a random variable which defines on a region we let it $P_{\Omega}$, besides we set the probability measure be $w$, from Fubini's theorem, we have,

\begin{align*}
&\int_{P_{\Omega}}\int_{[0,1]^d}|z_{1}z_{2}\ldots z_{d}-
\frac{1}{N}\sum_{i=1}^{N}\mathbf{1}_{[0,z)}(x_{i})|^p dz dw\\=&\int_{[0,1]^d}\int_{P_{\Omega}}|z_{1}z_{2}\ldots z_{d}-
\frac{1}{N}\sum_{i=1}^{N}\mathbf{1}_{[0,z)}(x_{i})|^p dw dz\\ \leq& \mathbb{E}\Big(|z_{1}z_{2}\ldots z_{d}-
\frac{1}{N}\sum_{i=1}^{N}\mathbf{1}_{[0,z)}(x_{i})|^p\Big)\\ \leq& \frac{|\mathscr{I}|^{p/2}}{N^{p}}.
\end{align*}

Therefore, from \eqref{CNbd0}, we have,

\begin{equation}\label{exup1}
    \mathbb{E}(L_{P}^{p}(D_N,x))\leq\frac{d^{\frac{p}{2}}\cdot c_2(d)^{\frac{p}{2}}}{N^{\frac{p}{2}+\frac{p}{2d}}}.
\end{equation}

Besides, we have

$$
    |I(f)-\tilde{I}(f,\mathbf{P})|^{p}\leq (\sup_{f\in F_{d,q}^{*},\|f\|_{d,q}^{*}\leq 1 }|I(f)-\tilde{I}(f,\mathbf{P})|)^{p}.
$$

Combining with \eqref{soboappx1}, we have

\begin{equation}\label{supapplp}
\begin{aligned}
    \sup_{f\in F_{d,q}^{*},\|f\|_{d,q}^{*}\leq 1 }|I(f)-\tilde{I}(f,\mathbf{P})|^{p}&\leq (\sup_{f\in F_{d,q}^{*},\|f\|_{d,q}^{*}\leq 1 }|I(f)-\tilde{I}(f,\mathbf{P})|)^{p}\\&=L_{P}^{p}(D_N,x).
    \end{aligned}
\end{equation}

Combining with \eqref{exup1}, we complete the proof.
\end{proof}

\begin{rem}
Theorem \ref{exunibd1} gives an upper bound of $p-$moment of integral approximation error for functions in Sobolev space $ F_{d,q}^{*}$ using the estimation for expected $L^p-$discrepancy, where $\frac{1}{p}+\frac{1}{q}=1$. If $p=2$ in Theorem \ref{exunibd1}, we obtain mean square error bound of uniform integration approximation in Sobolev space $ F_{d,2}^{*}$, this conclusion is consistent with upper bound for Sobolev space $\mathcal{H}^{\mathbf{1}}(K)$ in Theorem \ref{epecintap}. Easy to see that $ F_{d,2}^{*}\subset \mathcal{H}^{\mathbf{1}}(K)$, Theorem \ref{exunibd1} is actually contained by Theorem \ref{epecintap} for the case of $p=2$. But by using expected $L_p-$discrepancy($p\ge 1$ and can be arbitrary) bounds in Theorem \ref{exunibd1}, we obtain upper bounds of $p-$moment of integral approximation error for functions in general Sobolev space $ F_{d,q}^{*}$, comparing with the reproducing kernel method in Theorem \ref{epecintap}. 
\end{rem}

\begin{cor}\label{isogridp}
For any $d \ge 2, 1\leq p<\infty$ and $N\in \mathbb{N}$, $d-$dimensional sampling set $y=\{y_{1}, y_{2}, y_{3}, \ldots, y_{N}\}$ is uniformly distributed in the subset $Q_{1}, Q_{2}, Q_{3}, \ldots,
Q_{N}$ which is isometric grid partition of $[0,1]^{d}$, then for functions $f$ in Sobolev space $ F_{d,q}^{*}$, where $\frac{1}{p}+\frac{1}{q}=1$, we have

\begin{equation}
\mathbb{E}\big[\sup_{f\in F_{d,q}^{*},\|f\|_{d,q}^{*}\leq 1 }\Big|\frac{1}{N}\sum_{n=1}^{N}f(y_n)-\int_{[0,1]^{d}}f(x)dx\Big|^p\big]\leq \frac{d^{\frac{p}{2}}}{N^{\frac{p}{2}+\frac{p}{2d}}}.
\end{equation}
\end{cor}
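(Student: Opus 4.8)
The plan is to specialize the argument of Theorem \ref{exunibd1} to the isometric grid partition, where a sharper bound on the relevant index set is available. The entire first half of the proof of Theorem \ref{exunibd1}---the reduction via integration by parts and H{\"o}lder's inequality that produces the identity \eqref{soboappx1} equating $\sup_{\|f\|_{d,q}^{*}\leq 1}|I(f)-\tilde I(f,\mathbf P)|$ with the $L_p$-discrepancy, together with the moment computation that yields
$$
\mathbb{E}\Big(\big|z_1 z_2\ldots z_d-\tfrac{1}{N}\sum_{i=1}^{N}\mathbf{1}_{[0,z)}(y_i)\big|^p\Big)\leq \frac{|\mathscr{I}|^{p/2}}{N^{p}}
$$
---is independent of the specific partition and carries over verbatim, with $\Omega_i$ replaced by the grid cells $Q_i$ and $\mathscr{I}$ the set of indices whose cells meet the boundary $\partial R$. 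So I would invoke those steps directly rather than reproving them.

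The single point of departure is the cardinality estimate for $\mathscr{I}$. For the general equal measure partition, Theorem \ref{exunibd1} uses \eqref{CNbd0}, namely $|\mathscr{I}|\leq d\cdot c_2(d)\cdot N^{1-1/d}$. For the isometric grid I would instead use the sharper bound $|J_N|\leq d\cdot N^{1-1/d}$ established in Case 1 of Section \ref{prelim}, which holds because the grid geometry lets one count boundary cells face by face without incurring the diameter constant $c_2(d)=\sqrt{d}$. Substituting $|\mathscr{I}|=|J_N|\leq d\cdot N^{1-1/d}$ gives
$$
\frac{|\mathscr{I}|^{p/2}}{N^{p}}\leq \frac{\big(d\cdot N^{1-1/d}\big)^{p/2}}{N^{p}}=\frac{d^{p/2}\,N^{p/2-p/(2d)}}{N^{p}}=\frac{d^{p/2}}{N^{p/2+p/(2d)}},
$$
after which Fubini's theorem and the chain \eqref{supapplp} (both partition-independent) deliver the stated bound on $\mathbb{E}[\sup_{\|f\|_{d,q}^{*}\leq 1}|\tfrac{1}{N}\sum f(y_n)-\int f|^p]$.

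There is essentially no analytic obstacle here; the corollary is a clean specialization. The only thing worth flagging is that one must use the grid-specific bound $|J_N|\leq d\cdot N^{1-1/d}$ rather than merely inserting $c_2(d)=\sqrt{d}$ into the conclusion of Theorem \ref{exunibd1}, since the latter would yield the weaker constant $d^{p/2}\cdot(\sqrt{d})^{p/2}=d^{3p/4}$ in place of the sharper $d^{p/2}$ claimed here. The improvement comes precisely from counting boundary cells directly in the grid rather than passing through the diameter estimate \eqref{diam2}.
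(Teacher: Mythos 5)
Your proposal is correct and follows essentially the same route the paper intends: the paper states this corollary without proof, expecting the reader to rerun the argument of Theorem \ref{exunibd1} with the grid-specific boundary-cell count $|J_N|\leq d\cdot N^{1-1/d}$ from Case 1 of Section \ref{prelim} in place of the general bound \eqref{CNbd0}. Your closing remark is exactly the right point to flag --- naively substituting $c_2(d)=\sqrt{d}$ into the statement of Theorem \ref{exunibd1} would only give the weaker constant $d^{3p/4}$, so the stated constant $d^{p/2}$ indeed requires the direct face-by-face count, which is also how the paper's parallel Corollary \ref{edgp1} gets $d$ rather than $d^{3/2}$.
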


\begin{cor}\label{HSFCS1}
For any $d \ge 2, 1\leq p<\infty$ and $N\in \mathbb{N}$, $d-$dimensional sampling set $z=\{z_{1}, z_{2}, z_{3}, \ldots, z_{N}\}$ is uniformly distributed in the subset $E_{1}, E_{2}, E_{3},\ldots,
E_{N}$ which is a partition of $[0,1]^{d}$ formed by HSFC-based sampling, then for functions $f$ in Sobolev space $ F_{d,q}^{*}$, where  $\frac{1}{p}+\frac{1}{q}=1$, we have

\begin{equation}
\mathbb{E}\big[\sup_{f\in F_{d,q}^{*},\|f\|_{d,q}^{*}\leq 1 }\Big|\frac{1}{N}\sum_{n=1}^{N}f(z_n)-\int_{[0,1]^{d}}f(x)dx\Big|^p\big]\leq \frac{(2d\cdot\sqrt{d+3})^{\frac{p}{2}}}{N^{\frac{p}{2}+\frac{p}{2d}}}.
\end{equation}
\end{cor}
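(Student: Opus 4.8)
The plan is to recognize Corollary \ref{HSFCS1} as a direct specialization of Theorem \ref{exunibd1}, where the only genuine work is to pin down the value of the maximal-diameter constant $c_2(d)$ for the partition induced by the Hilbert space filling curve. First I would invoke \eqref{ehi} and \eqref{xhuei}: the sets $E_i=H(I_i)$ form a genuine partition of $[0,1]^{d}$ with $\lambda(E_i)=1/N$, and the scrambled samples satisfy $z_i=X_i=H(x_i)\sim U(E_i)$, independently across $i$. This is precisely the stratified, equal-measure sampling hypothesis required by Theorem \ref{exunibd1}, so that theorem applies verbatim to the point set $z=\{z_1,z_2,\ldots,z_N\}$.

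Next I would estimate the diameter of each stratum. Since $I_i=[(i-1)/N,i/N]$ is an interval of length $1/N$, the Hilbert-curve diameter bound \eqref{uppei} (taken from \cite{HZ2019,HO2016} and already recorded in the proof of Corollary \ref{edgp0}) gives $\operatorname{diam} E_i\le 2\sqrt{d+3}\cdot N^{-1/d}$ for every $1\le i\le N$. Comparing this with the general diameter inequality \eqref{diam2}, the maximal-diameter constant is identified as $c_2(d)=2\sqrt{d+3}$.

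Finally I would substitute $c_2(d)=2\sqrt{d+3}$ into the bound furnished by Theorem \ref{exunibd1} and simplify the constant via $d^{p/2}\,c_2(d)^{p/2}=d^{p/2}\,(2\sqrt{d+3})^{p/2}=(2d\sqrt{d+3})^{p/2}$, which yields exactly the claimed estimate $(2d\sqrt{d+3})^{p/2}\,N^{-p/2-p/(2d)}$. No step here is intrinsically difficult: once the HSFC-based partition is identified as equal-measure, the entire argument is a substitution. The one nontrivial ingredient is the Hilbert-curve diameter estimate \eqref{uppei}, but this is imported as a known lemma; hence the main point to verify carefully is simply that HSFC-based sampling genuinely meets the independence and equal-measure requirements of Theorem \ref{exunibd1}, which \eqref{xhuei} guarantees.
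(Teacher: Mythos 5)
Your proposal is correct and is essentially the paper's own (implicit) argument: the paper proves Corollary \ref{HSFCS1} by specializing Theorem \ref{exunibd1} to the HSFC partition, identifying $c_2(d)=2\sqrt{d+3}$ via the diameter bound \eqref{uppei} exactly as already done in the proof of Corollary \ref{edgp0}, and simplifying $d^{p/2}c_2(d)^{p/2}=(2d\sqrt{d+3})^{p/2}$. Your verification that the HSFC strata $E_i=H(I_i)$ form an equal-measure partition with $z_i\sim U(E_i)$ independently, via \eqref{ehi} and \eqref{xhuei}, is precisely the hypothesis check the paper relies on.
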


\begin{rem}
Corollary \ref{isogridp} and \ref{HSFCS1} give upper bound of $p-$moment of integral approximation error using isometric grid partition , this result is better than that of using HSFC-based sampling, we notice the sampling regime of $N=m^d$ for dimension $d$, which implies the sampling number is an exponential dependence, but the HSFC-based sampling with scrambled van der Corput inputs does not require the highly composite sample sizes that the grid sampling requires, particularly for large $d$. 
\end{rem}

\begin{cor}\label{slln}
For any $d \ge 2, 1\leq p<\infty,$ and $N\in \mathbb{N}$, $d-$dimensional sampling set $t=\{t_{1}, t_{2}, t_{3}, \ldots, t_{N}\}$ is uniformly distributed in the subset $\Omega_{1}, \Omega_{2}, \Omega_{3}, \ldots, \Omega_{N}$ which is some general partition of $[0,1]^{d}$, then for functions $f$ in Sobolev space $ F_{d,q}^{*}$, where $\frac{1}{p}+\frac{1}{q}=1$, then

\begin{equation}
    \mathbb{P}\Big(\lim_{N\rightarrow \infty}\frac{1}{N}\sum_{n=1}^{N}f(t_n)=\int_{[0,1]^{d}}f(x)dx\Big)=1.
\end{equation}
\end{cor}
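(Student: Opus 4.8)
The plan is to deduce almost-sure convergence from the quantitative moment bound of Theorem \ref{exunibd1} via Markov's inequality and the first Borel--Cantelli lemma. Write $S_N = \frac{1}{N}\sum_{n=1}^{N} f(t_n) - \int_{[0,1]^{d}} f(x)\,dx$. For a fixed $f\in F_{d,q}^{*}$ with $\|f\|_{d,q}^{*}>0$, the scaled function $f/\|f\|_{d,q}^{*}$ has unit norm, so $|S_N|^p \le (\|f\|_{d,q}^{*})^p \sup_{\|h\|_{d,q}^{*}\le 1}|\frac{1}{N}\sum_n h(t_n)-\int h|^p$ pointwise on the sampling space; taking expectations and invoking Theorem \ref{exunibd1} yields $\mathbb{E}[|S_N|^p]\le (\|f\|_{d,q}^{*})^p\, d^{p/2}c_2(d)^{p/2}\, N^{-\frac{p}{2}(1+1/d)}$. (The case $\|f\|_{d,q}^{*}=0$ is trivial.)

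The one genuine obstacle is that the exponent $\frac{p}{2}(1+\frac1d)$ need not exceed $1$: when $p$ is close to $1$ (that is, $q$ close to $\infty$) it is strictly less than $1$ and a direct Borel--Cantelli argument on the $p$-th moment fails. I would circumvent this by raising the integrability index. Set $\tilde p=\max(p,2)$ with conjugate $\tilde q$; since $\tilde p\ge p$ we have $\tilde q\le q$, and because $[0,1]^{d}$ carries a probability measure, monotonicity of $L^{s}$-norms gives $\|f\|_{d,\tilde q}^{*}\le\|f\|_{d,q}^{*}<\infty$, so that $f\in F_{d,\tilde q}^{*}$, i.e. $F_{d,q}^{*}\subseteq F_{d,\tilde q}^{*}$. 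Applying Theorem \ref{exunibd1} with the index $\tilde p$ then produces $\mathbb{E}[|S_N|^{\tilde p}]\le C_{d,f}\, N^{-\frac{\tilde p}{2}(1+1/d)}$, and since $\tilde p\ge 2$ and $d\ge 2$ the exponent obeys $\frac{\tilde p}{2}(1+\frac1d)\ge 1+\frac1d>1$.

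With a summable moment the remainder is routine. For each $\epsilon>0$, Markov's inequality gives $\mathbb{P}(|S_N|>\epsilon)\le \epsilon^{-\tilde p}\,\mathbb{E}[|S_N|^{\tilde p}]\le C_{d,f}\,\epsilon^{-\tilde p}\, N^{-\frac{\tilde p}{2}(1+1/d)}$, and the series $\sum_N \mathbb{P}(|S_N|>\epsilon)$ converges because the exponent exceeds $1$. The first Borel--Cantelli lemma then forces $\mathbb{P}(|S_N|>\epsilon \text{ infinitely often})=0$; I would emphasize that this step requires no independence between samples of different sizes, only that all the $S_N$ are defined on one common probability space. Finally, taking $\epsilon=1/k$ and intersecting the resulting probability-one events over $k\in\mathbb{N}$ (a countable intersection) yields $S_N\to 0$ almost surely, which is exactly the claimed statement.

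In summary, once the index-raising observation $F_{d,q}^{*}\subseteq F_{d,\tilde q}^{*}$ secures a moment whose decay exponent beats $1$, the proof collapses to the standard Markov-plus-Borel--Cantelli route; I expect that exponent bookkeeping to be the only step needing care.
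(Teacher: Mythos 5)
Your proof is correct, and its skeleton --- the moment bound of Theorem \ref{exunibd1}, then Markov's inequality, then the first Borel--Cantelli lemma over $\epsilon=1/k$ --- is exactly the route the paper indicates, since the paper's entire justification is a one-line remark citing \eqref{exup1}, \eqref{supapplp} and Markov's inequality. The genuine difference is your index-raising step, and it matters: taken literally at the given exponent $p$, the paper's sketch only yields
$\mathbb{P}\left(|S_N|>\epsilon\right)\leq C_{d,f}\,\epsilon^{-p}N^{-\frac{p}{2}\left(1+\frac{1}{d}\right)}$
for the integration error $S_N$, and this is summable in $N$ only when $p>\frac{2d}{d+1}$; for $1\leq p\leq\frac{2d}{d+1}$ (e.g.\ $p=1$ and any $d\geq 2$) Borel--Cantelli does not apply and the remark, as stated, does not prove the corollary. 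Your observation that $F_{d,q}^{*}\subseteq F_{d,\tilde q}^{*}$ for $\tilde q\leq q$ --- by monotonicity of $L^{s}$-norms of $\frac{\partial^{d}f}{\partial x}$ on the probability space $[0,1]^{d}$ --- lets you invoke Theorem \ref{exunibd1} at the exponent $\tilde p=\max(p,2)$, whose decay rate $\frac{\tilde p}{2}\left(1+\frac{1}{d}\right)\geq 1+\frac{1}{d}>1$ is always summable, and this closes precisely the gap; your remark that Borel--Cantelli needs no independence across sample sizes, only a common probability space carrying all the $S_N$, is also a point the paper leaves implicit. In short: same approach, but your version is a complete argument for all $p\geq 1$, whereas the paper's own justification silently assumes the summability that fails at small $p$.
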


\begin{rem}
\eqref{exup1}, \eqref{supapplp} and Markov’s inequality imply Corollary \ref{slln}, which provides a strong law of large numbers for integration on stratified random sampling formed by equal measure partition in Sobolev space $F_{d,q}^{*}$, this could be seen as an application of stratified sampling and $p-$moment of integral approximation error. The former result is for $L_p-$space and digital nets randomized by a nested uniform scramble, see \cite{OR2021}.
\end{rem}

\begin{cor}\label{pmrs}
Let $z=\{z_{1},z_{2},\ldots,z_{N}\}$ be simple random sampling points uniformly distributed in $[0,1]^{d}$, then we have

\begin{equation}
    \mathbb{E}(L_{P}^{p}(D_N,z))=O(\frac{1}{N^{\frac{p}{2}}}), 
\end{equation}
for $N\rightarrow\infty$, where $A=O(B)$ means the quantities $A,B$ on both sides of the sign $O$ are infinitesimals of the same order.
\end{cor}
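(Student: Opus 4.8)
The plan is to exploit the fact that simple random sampling is the degenerate equal-measure partition $\Omega_1=\cdots=\Omega_N=[0,1]^d$, so that $z_1,\ldots,z_N$ are i.i.d.\ uniform on $[0,1]^d$, and to analyze the $p$-moment box by box. Writing $w=(w_1,\ldots,w_d)$ for the box parameter and $V(w)=w_1\cdots w_d=\lambda([0,w))$, I would first apply Fubini's theorem to interchange the expectation over the samples with the integral over $w$:
$$\mathbb{E}\big(L_{P}^{p}(D_N,z)\big)=\int_{[0,1]^d}\mathbb{E}\Big[\Big|\frac{1}{N}\sum_{i=1}^N\xi_i(w)\Big|^p\Big]\,dw,$$
where $\xi_i(w)=\mathbf{1}_{[0,w)}(z_i)-V(w)$ are, for each fixed $w$, i.i.d.\ centered random variables with $|\xi_i(w)|\le 1$ and variance $V(w)(1-V(w))\le 1/4$.

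For the upper bound I would invoke the estimate already carried out inside the proof of Theorem \ref{exunibd1}: here the index set $\mathscr{I}$ of strata meeting $\partial R$ is all of $\{1,\ldots,N\}$, since every point is uniform over the whole cube. Hence the combinatorial pairing argument—only tuples in which each index recurs contribute, and there are $O(N^{p/2})$ of them, each bounded by one—gives $\mathbb{E}(|\frac{1}{N}\sum_i\xi_i(w)|^p)\le |\mathscr{I}|^{p/2}/N^p=N^{p/2}/N^p=N^{-p/2}$ uniformly in $w$. Integrating over $w$, a set of measure one, yields $\mathbb{E}(L_{P}^{p}(D_N,z))\le N^{-p/2}$, which is the $O(N^{-p/2})$ bound in the one-sided sense.

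To match the stated meaning of $O$ as \emph{same order}, I would then establish the corresponding lower bound $\mathbb{E}(L_{P}^{p}(D_N,z))\ge c\,N^{-p/2}$. For each fixed interior $w$ the central limit theorem shows that $N^{-1/2}\sum_i\xi_i(w)$ converges in distribution to $\mathcal{N}(0,V(w)(1-V(w)))$, and because the summands are uniformly bounded by $1$ every moment is uniformly integrable, so moment convergence holds:
$$N^{p/2}\,\mathbb{E}\Big[\Big|\tfrac{1}{N}\sum_i\xi_i(w)\Big|^p\Big]\longrightarrow \big(V(w)(1-V(w))\big)^{p/2}\,\mathbb{E}|Z|^p,\qquad Z\sim\mathcal{N}(0,1).$$
Since the left-hand side is bounded uniformly in $N$ and $w$ by the same pairing constant, dominated convergence lets me pass the limit inside the integral over $w$, giving $N^{p/2}\mathbb{E}(L_{P}^{p}(D_N,z))\to \mathbb{E}|Z|^p\int_{[0,1]^d}(V(w)(1-V(w)))^{p/2}\,dw$, a strictly positive finite constant because $V(w)\in(0,1)$ on the interior. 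This pins the order at exactly $N^{-p/2}$.

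I expect the main obstacle to be the lower bound rather than the upper bound, which is essentially a reuse of Theorem \ref{exunibd1}. Upgrading the distributional central limit theorem to convergence of $p$-th moments relies on the uniform integrability supplied by $|\xi_i(w)|\le 1$, and interchanging the limit $N\to\infty$ with the integral over the box parameter requires a dominating bound uniform in both $N$ and $w$; checking that the limiting integral $\int_{[0,1]^d}(w_1\cdots w_d(1-w_1\cdots w_d))^{p/2}\,dw$ is positive and finite is routine but is precisely what certifies that simple random sampling attains—and cannot beat—the rate $N^{-p/2}$, in contrast with the improved stratified rate $N^{-p/2-p/(2d)}$ of Theorem \ref{exunibd1}.
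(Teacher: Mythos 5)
Your proposal is correct, and its core --- Fubini, the observation that for fixed $w$ the count $\sum_{i=1}^N\mathbf{1}_{[0,w)}(z_i)$ is binomial, the central limit theorem combined with the absolute moments of a standard normal, and integration of the limiting variance factor over the cube --- is exactly the paper's route (which follows Steinerberger's computation of average $L_p$-discrepancies). The difference lies in how the two sides of the ``same order'' claim are handled, and here you are more careful than the paper. The paper substitutes the normal limit into the moment computation as though convergence in distribution were an exact moment identity at finite $N$, and then only bounds the resulting $w$-integral above by $(\frac{2}{2+p})^{d}$, leaving positivity (the half of the claim that makes the order two-sided) implicit. You instead separate a non-asymptotic upper bound, obtained by recycling the pairing argument of Theorem \ref{exunibd1} with $\mathscr{I}=\{1,\dots,N\}$, from a rigorous lower bound: the CLT is upgraded to convergence of $p$-th moments via uniform integrability (available because $|\xi_i(w)|\le 1$), and the limit is passed through the $w$-integral by dominated convergence, with the pairing bound serving as the uniform dominating constant. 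This is precisely the justification the paper's proof lacks, so your version buys rigor at essentially no extra cost. One caveat, inherited from the paper itself: the expansion $|\sum_{i}\xi_{i}|^{p}=\sum_{i_1,\dots,i_p}\xi_{i_1}\cdots\xi_{i_p}$ behind the pairing bound is valid only for even integer $p$; for general $p\ge 1$ you should dominate the $p$-th moment by an even moment through Lyapunov's inequality, $\mathbb{E}|X|^{p}\le(\mathbb{E}|X|^{2m})^{p/(2m)}$ with $2m\ge p$, which preserves both your uniform $O(N^{-p/2})$ bound and your dominating function, so the argument goes through for all $p\ge 1$.
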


\begin{proof}

Considering,

$$
\mathbb{E}(L_P^p(D_N,z))=\mathbb{E}
(\int_{[0,1]^d}|\frac{1}{N}
\sum_{n=1}^{N}\textbf{1}_{[0,x)}(z_n)-\lambda([0,x))|^pdx).
$$

Suppose the whole summation as a random variable which defines on a region we let it $\Omega_{N}$, besides we set the probability measure be $\mu$, thus,

\begin{equation}\label{elpp}
\begin{aligned}
\mathbb{E}(L_P^p(D_N,z))&=\int_{\Omega_{N}}
\int_{[0,1]^d}|\frac{1}{N}
\sum_{n=1}^{N}\textbf{1}_{[0,x)}(z_n)-\lambda([0,x))|^pdxd\mu\\&
=\int_{[0,1]^d}\int_{\Omega_{N}}|\frac{1}{N}
\sum_{n=1}^{N}\textbf{1}_{[0,x)}(z_n)-\lambda([0,x))|^pd\mu dx.
\end{aligned}
\end{equation}

Each single random variable either lands in $[0,x)$ or does not, which is just a Bernoulli trial
with probability $\lambda([0, x))$ and thus the entire expression follows a Binomial distribution, i.e., 

\begin{equation}\label{BDN}
    \sum_{n=1}^{N}\textbf{1}_{[0,x)}(z_n)\sim \mathcal{B}(N,\lambda([0,x)).
\end{equation}

Therefore, as $n\rightarrow\infty$, according to the central limit theorem, which is,

\begin{equation}\label{CLT}
    \mathcal{B}(n,p)=\mathcal{N}(np,np(1-p)).
\end{equation}

Applying \eqref{CLT} to the above equation \eqref{BDN}, we have,

$$
    \frac{\sqrt{N}}{\sqrt{\lambda([0,x))(1-\lambda([0,x)))}}[\frac{1}{N}
\sum_{n=1}^{N}\textbf{1}_{[0,x)}(z_n)-\lambda([0,x))]\sim \mathcal{N}(0,1).
$$

Thus,

\begin{equation}\label{ztfb}
    (\frac{\sqrt{N}}{\sqrt{\lambda([0,x))(1-\lambda([0,x)))}})^{p}|\frac{1}{N}
\sum_{n=1}^{N}\textbf{1}_{[0,x)}(z_n)-\lambda([0,x))|^{p}\sim |X|^{p},
\end{equation}
where $X$ is a random variable satisfying $X\sim \mathcal{N}(0,1)$.

From \eqref{ztfb}, when $N\rightarrow\infty$, we have

    \begin{align*}
    &\int_{\Omega_{N}}|\frac{1}{N}
\sum_{n=1}^{N}\textbf{1}_{[0,x)}(z_n)-\lambda([0,x))|^pd\mu\\&= (\frac{\sqrt{\lambda([0,x))(1-\lambda([0,x)))}}{\sqrt{N}})^{p}\int_{-\infty}^{\infty}|X|^{p}d\mathcal{N}(0,1)\\&=(\frac{\sqrt{\lambda([0,x))(1-\lambda([0,x)))}}{\sqrt{N}})^{p}\frac{2^{\frac{p}{2}}}{\sqrt{\pi}}\Gamma(\frac{1+p}{2}).
    \end{align*}

Therefore,

\begin{equation}\label{gamn}
\begin{aligned}
    &\int_{[0,1]^{d}}\int_{\Omega_{N}}|\frac{1}{N}
\sum_{n=1}^{N}\textbf{1}_{[0,x)}(z_n)-\lambda([0,x))|^pd\mu dx\\&=\frac{2^{\frac{p}{2}}}{\sqrt{\pi}}\Gamma(\frac{1+p}{2})(\frac{1}{\sqrt{N}})^{p}\int_{[0,1)^{d}} \sqrt{\lambda([0,x))(1-\lambda([0,x)))}^{p}dx.
\end{aligned}
\end{equation}

Due to $0\leq \lambda([0,x))\leq 1$,

\begin{equation}\label{xp}
    \int_{[0,1)^{d}} \sqrt{\lambda([0,x))(1-\lambda([0,x)))}^{p}dx\leq (\frac{2}{2+p})^{d}
\end{equation}
holds.

Combining \eqref{elpp}, \eqref{gamn} and \eqref{xp}, we complete the proof.

\end{proof}

\begin{rem}
Corollary \ref{pmrs} gives the convergence order of expected $L_p-$discrepancy bound for simple random sampling. We follow the proof in \cite{SS2010} which gives convergence order of average $L_p-$discrepancy for simple random sampling. Comparing with \eqref{exup1}, convergence order of $p-$moment of $L_p-$discrepancy for stratified random sampling is better than that for simple random sampling.
\end{rem}

\section{Conclusion}\label{conclu}

We study the uniform integration approximation for stratified sampling formed by equal measure partition. The stratified samples could produce more uniform point distribution configuration than crude Monte Carlo sampling point set. We prove that stratified samples could obtain better convergence order of uniform integration approximation bounds in sense of randomness, comparing with the use of simple random samples and Latin hypercube samples in certain function space. In most function space composed of smooth enough functions, from Koksma-Hlawka inequality, good orders of expected approximation are obtained by formers. We deal with more general function space such as Sobolev space $\mathcal{H}^{\mathbf{1}}(K)$ and $F_{d,q}^{*}$ in this paper. For several special cases of equal measure partition, our approximation bounds are explicit, especially for HSFC-based sampling, we obtain upper bound of $p-$moment of integral approximation error under moderate sample size, which does not require the highly composite sample sizes that the jittered sampling requires.

\end{document}